\documentclass[12pt,centertags]{amsart}
\usepackage{amssymb}
\usepackage[all]{xy}
\usepackage{amsmath,amstext,amsthm,a4,amssymb,amscd}
\usepackage[mathscr]{eucal}
\usepackage{mathrsfs}
\usepackage{epsf}
\usepackage{tikz}
\usepackage{extarrows}
\usetikzlibrary{matrix,arrows,decorations.pathmorphing}
\usepackage[a4paper,top=3cm,bottom=3cm,
left=2.2cm,right=2.2cm]
{geometry}
\numberwithin{equation}{section}
\allowdisplaybreaks[3]

\usepackage{color}
\usepackage{soul}

\newcommand{\field}[1]{\mathbb{#1}}
\newcommand{\C}{\field{C}}
\newcommand{\N}{\field{N}}

\newcommand{\R}{\field{R}}

\def\Im{{\rm Im}}

\DeclareMathOperator{\ch}{ch}

\newtheorem{thm}{Theorem}[section]
\newtheorem{lemma}[thm]{Lemma}
\newtheorem{prop}[thm]{Proposition}

\theoremstyle{definition}

\theoremstyle{definition}
\newtheorem{defn}[thm]{Definition}

\newcommand{\be}{\begin{eqnarray}}
	\newcommand{\ee}{\end{eqnarray}}

\numberwithin{equation}{section}
\numberwithin{thm}{section}

\newcommand{\comment}[1]{}

\begin{document}
	
	\title[$\lambda$-ring structure in differential K-theory]
	{$\lambda$-ring structure in differential K-theory}
	
	\author{Bo LIU}
	\address{School of Mathematical Sciences,  Key Laboratory of 
		MEA (Ministry of Education) \& Shanghai Key Laboratory 
		of PMMP,  East China Normal University, Shanghai 
		200241, China}
	\email{bliu@math.ecnu.edu.cn}
	
	\author{Xiaonan MA}
	
	\address{Chern Institute of Mathematics \& LPMC, Nankai 
		University, Tianjin 300071, P.R. China 
	}  
	\email{xiaonan.ma@nankai.edu.cn}	
	
	\date{\today}
	
	\begin{abstract}
		We establish the splitting principle for differential K-theory, a refinement of topological K-theory that incorporates geometric data via differential forms. Using this principle, we prove that the differential $K^0$-ring associated to closed smooth manifolds admits a $\lambda$-ring structure. This structure enables a concrete construction of the Adams operations in differential K-theory introduced by Bunke. At last, we extend all these results to an equivariant setting associated with a compact Lie group action.
	\end{abstract}
	
	\maketitle
	
	
	\setcounter{section}{-1}
	

	\section{Introduction}\label{s00}  
	
	
	The notion of 
	$\lambda$-rings was first introduced by Grothendieck 
	\cite{Grothendieck} in 1956 and later studied by Atiyah 
	and Tall \cite{ATall69}; see also \cite{BerthelotTh}. 
	
	For a commutative ring $R$ with identity, a 
	$\lambda$-structure on $R$ consists of a countable family of
	maps $\lambda^n:R\to R$, $n\in \N$, satisfying 
	some conditions (see Definition \ref{defn:1.06}).  
	If only a subset of these conditions is satisfied, we refer 
	to it as a pre-$\lambda$-ring structure
	(Definition \ref{defn:2.01}). The $\lambda$-ring structure is 
	closely related to Adams operations: if $R$ is a $\lambda$-ring, 
	one can associate naturally
	Adams operations. 
	Conversely, for a pre-$\lambda$-ring $R$, with compatible  
	Adams operations, if $R$ is torsion free, 
	the pre-$\lambda$-ring
	structure of $R$ is a $\lambda$-ring structure (Theorem
	\ref{thm:1.07}). 
	
	It is well known that, although the topological K-ring
	is not torsion free, it carries a natural
	$\lambda$-ring structure \cite{ATall69}. 
	The corresponding Adams operations plays a key role
	in the proof of Adams' celebrated theorem on the Hopf 
	invariant \cite{A62}.

	In \cite{GS90c}, Gillet and Soul\'e introduced the arithmetic 
	K-theory in  Arakelov geometry. This theory extends 
	Grothendieck's 
	K-theory by adding conjugation-invariant Hermitian metrics 
	on homomorphic vector bundles over the complex points and 
	differential forms of type $(p,p)$ modulo
	$\mathrm{Im}\partial+\mathrm{Im}\bar{\partial}$.
	A pre-$\lambda$-ring structure on the arithmetic K-ring 
	was constructed in \cite{GS90c}, which was later shown 
	by Roessler \cite{Ro01} to be a $\lambda$-ring structure. 
	
	Differential K-theory, introduced by Freed-Hopkins
	\cite{FH00} and developed further by Hopkins-Singer \cite{HS05},
	Simons-Sullivan \cite{SS10}, Bunke-Schick \cite{BS09}, Freed-Lott \cite{FreedLott10}, etc., provides 
	a smooth analogue of arithmetic K-theory.
	It extends topological K-theory by adding Hermitian 
	metrics and connections on complex vector bundles and differential
	forms modulo exact forms.
	In \cite{Bunke10}, Bunke defined Adams operations on
	the differential K-ring directly in a sophisticated topological way and obtained
	an associated Adams-Riemann-Roch theorem.
	In \cite{LM20}, the authors explicitly constructed 
	a pre-$\lambda$-ring structure on the differential K-ring, which plays important role in their work
	on the localization formula for $\eta$-invariants. 
	In this paper, using the splitting principle for differential
	K-theory, which is established in Theorem 
	\ref{prop:2.2},  we prove in Theorem \ref{thm:1.09} that this
	pre-$\lambda$-ring structure is in fact a 
	$\lambda$-ring structure,  and we show that the corresponding 
	Adams operations yield a constructive realization
	of those defined in \cite{Bunke10}.
	
	This paper is organized as follows. In Section 
	\ref{s01}, we review the basic facts on $\lambda$-ring.
	In Section \ref{s02}, we 
	prove the splitting principle for differential K-group
	and state our main result Theorem \ref{thm:1.09}
	that any differential K-ring
	has a natural $\lambda$-ring structure.
	In Section \ref{s03}, we prove Theorem 
	\ref{thm:1.09}.
	In Section
	\ref{s04}, we give the explicit construction of the 
	associated Adams operations in differential K-theory.
	In Section \ref{s05}, we discuss the $\lambda$-ring 
	structure on an equivariant version of differential K-theory.
	
	\
	
	\noindent{\bf Acknowledgments}. 
	BL is partially 
	supported by NSFC No.12225105, National Key R\&D Program of China
	No.2024YFA1013203 and Science and Technology 
	Commission of Shanghai Municipality No.22DZ2229014.
	XM is partially supported by Nankai Zhide Foundation,
	and funded through 
	the Institutional Strategy of the University of Cologne 
	within the German Excellence Initiative.

	
	\section{$\lambda$-ring structure}\label{s01}

	\begin{defn}\label{defn:2.01}\cite[\S V Definition 2.1]{BerthelotTh}
		For a commutative ring $R$ with identity,
		a pre-$\lambda$-ring structure 
		is defined by a countable set of maps $\lambda^n:R\rightarrow R$
		with $n\in \N$ 	such that for all $x,y\in R$,
		\begin{align}\label{eq:1.01}
			\begin{split}
				\lambda^0(x)&=1;  \\
				\lambda^1(x)&=x; \\
				\lambda^n(x+y)&=\sum_{j=0}^n\lambda^j(x)\lambda^{n-j}(y). 
			\end{split}
		\end{align}
		If $R$ has a pre-$\lambda$-ring structure, 
		we call it a pre-$\lambda$-ring.
	\end{defn}
	
	Remark that in \cite[\S I.1]{ATall69}  
	the pre-$\lambda$-ring here is called the $\lambda$-ring.
	
	Let $R$ be a pre-$\lambda$-ring.
	If $t$ is an indeterminate,  we define for $x\in R$,
	\begin{align}\label{eq:2.001}
		\lambda_t(x)=\sum_{n\geq 0}\lambda^n(x)t^n.
	\end{align}
	Then the relations (\ref{eq:1.01}) show that $\lambda_t$ is 
	a homomorphism from 
	the additive group of $R$ into the multiplicative group 
	$1+R[[t]]^+$, of 
	formal power series in $t$ with constant term 1, i.e.,
	\begin{align}\label{eq:2.002}
		\lambda_t(x+y)=\lambda_t(x)\lambda_t(y)\quad 
		\text{for any}\ x,y\in R.
	\end{align} 
	From (\ref{eq:1.01}) and (\ref{eq:2.001}), we have
	\begin{align}\label{eq:1.05b}
		\lambda_t(0)=1.
	\end{align}
	
	Let $\xi_1,\cdots, \xi_q, \zeta_1\cdots, \zeta_r$ be 
	indeterminates and let $s_i$ and $\sigma_i$ be the $i$-th 
	elementary symmetric functions in $\xi_1,\cdots, \xi_q$ and 
	$\zeta_1\cdots, \zeta_r$ respectively. Let $P_n(s_1,\cdots,s_n; 
	\sigma_1,\cdots, \sigma_n)$ be the coefficient of $t^n$ in 
	$\prod_{ij}(1+\xi_i\zeta_jt)$. Let $P_{n,m}(s_1,\cdots, s_{mn})$
	be the coefficient of $t^n$ in
	$$\displaystyle \prod_{i_1<\cdots < i_m}(1+\xi_{i_1}\cdots 
	\xi_{i_m}\,t).$$ We may assume that $r, q\geq n$ for $P_n$ and 
	$q\geq nm$ for $P_{n,m}$. Easy to see that $P_n$ is a polynomial 
	with weight $n$ in $\{s_i \}$ and also weight $n$ in 
	$\{\sigma_i \}$, $P_{n,m}$ is a polynomial with weight $nm$ in 
	$\{s_i \}$. 
	Both $P_n$ and $P_{n,m}$ have integer coefficients and
	so may be defined in any commutative ring.
	
	Let $A$ be any commutative ring with identity.
	By \cite[\S I Lemma 1.1]{ATall69}, $1+A[[t]]^+$ is a 
	pre-$\lambda$-ring with new operations $(\widetilde{+},
	\widetilde{\times}, \tilde{\lambda}^n)$ (with
	$1$ is the ``zero" and $1+t$ is the ``identity")
	defined as follows: for $a_i, b_j\in A$,
	\begin{align}\label{eq:2.041}
		\begin{split}
			\left(1+\sum_{n\geq 1} a_nt^n\right)\widetilde{+}
			\left(1+\sum_{n\geq 1} b_nt^n\right)&:= 
			\left(1+\sum_{n\geq 1} a_nt^n\right)\cdot
			\left(1+\sum_{n\geq 1} b_nt^n\right),\\
			\left(1+\sum_{n\geq 1} a_nt^n\right)\widetilde{\times}\left(
			1+\sum_{n\geq 1} b_nt^n\right)&:= 
			1+\sum_{n\geq 1} P_n(a_1,\cdots,a_n; b_1,\cdots, b_n)\,t^n
		\end{split}
	\end{align}
	and
	\begin{align}\label{eq:1.022}
		\tilde{\lambda}^m\left(1+\sum_{n\geq 1} a_nt^n\right)=1+\sum_{n\geq 1} 
		P_{n,m}(a_1,\cdots, a_{nm})\,
		t^n.
	\end{align}
	In particular, for any $I, J, K\in 1+A[[t]]^+$, we have
	\begin{align}\label{eq:1.05a}
		I\,\widetilde{+} J=J\,\widetilde{+} I,\quad 
		I\,\widetilde{\times} J=J\,\widetilde{\times} I,\quad
		(I\,\widetilde{+} J)\widetilde{\times} K= 
		I\,\widetilde{\times} K \widetilde{+} J\,\widetilde{\times} K,
	\end{align}
	and
	\begin{align}\label{eq:1.06a}
		\tilde{\lambda}^0(I)=1+t,\quad
		\tilde{\lambda}^1(I)=I,\quad 
		\tilde{\lambda}^n(I+J)=\widetilde{+}_{j=0}^n\,
		\left(\tilde{\lambda}^j(I)\widetilde{\times}\tilde{\lambda}^{n-j}
		(J)\right),
	\end{align}
	where $\widetilde{+}_{j=0}^n$ denotes the ``sum" 
	for $j$ from $0$ to $n$ associated with the new ``sum"
	operation
	$\widetilde{+}$.
	
	\begin{defn}\cite[\S I Definition 1.2]{ATall69}\label{defn:1.06}
		We say a pre-$\lambda$-ring structure of $R$ is a 
		$\lambda$-ring structure if $\lambda_t:R\to 1+R[[t]]^+$
		is a homomorphism of pre-$\lambda$-rings,
		i.e., for any $x, y\in R$, $n\in \N$,
		\begin{align}
			\lambda_t(x+y)=\lambda_t(x)\widetilde{+}\lambda_t(y),
			\ \lambda_t(xy)=\lambda_t(x)\widetilde{\times}
			\lambda_t(y),\ 
			\lambda_t(\lambda^n(x))=\widetilde{\lambda}^n(\lambda_t(x)),
		\end{align}
		which reads that
		for any $x, y\in R$,
		\begin{align}\label{eq:1.23}
			\lambda^n(xy)=P_n(\lambda^1(x),\cdots,\lambda^n(x); 
			\lambda^1(y),\cdots, \lambda^n(y)),
			\ \text{i.e.,}\ \lambda_t(xy)=\lambda_t(x)
			\widetilde{\times}
			\lambda_t(y),
		\end{align}
		and
		\begin{align}\label{eq:1.34}
			\lambda^m(\lambda^n(x))=P_{m,n}(\lambda^1(x),
			\cdots, \lambda^{mn}(x)),
			\ \text{i.e.,}\ \lambda_t(\lambda^n(x))
			=\widetilde{\lambda}^n(\lambda_t(x)).
		\end{align}
		If $R$ has a $\lambda$-ring structure, 
		we call it a $\lambda$-ring. 
	\end{defn}
	
	If $R$ is a $\lambda$-ring, from (\ref{eq:1.01}), 
	(\ref{eq:1.06a}) and (\ref{eq:1.34}) for 
	$n=0$, we get
	\begin{align}\label{eq:1.7a}
		\lambda_t(1)=1+t.
	\end{align}
	
	Note that in \cite{ATall69} 
	the $\lambda$-ring here is called the special $\lambda$-ring.
	The following theorem (see \cite[p.49]{LNM308} or 
	\cite[\S V Appendice]{BerthelotTh}) is a convenient 
	criterion to verify whether 
	a pre-$\lambda$-ring is in fact a $\lambda$-ring.
	
	\begin{thm}\label{thm:1.07}
		Let $R$ be a pre-$\lambda$-ring. We assume that $R$ is torsion 
		free, that is, for any $r\in R$, $r\neq 0$ and any 
		$n\in \N^*$, $nr=\underbrace{r+\cdots +r}_n \neq 0$.
		Let $\Psi^n:R\to R$, $n\in \N^*$ be the Adams operations 
		defined by
		\begin{align}\label{eq:1.19}
			\frac{d}{dt}\log\lambda_t(a)=\sum_{n=0}^{+\infty}(-1)^n
			\Psi^{n+1}(a)t^n,\quad 
			\text{for all}\ a\in R.
		\end{align}
		Suppose that for any $n, m\in \N^*$,  $a, b\in R$,
		we have
		\begin{align}\label{eq:1.21}
			\Psi^n(1)=1,\quad
			\Psi^n(ab)=\Psi^n(a)\Psi^n(b),\quad \Psi^n(\Psi^m(a))=\Psi^{nm}(a).
		\end{align}
		Then the pre-$\lambda$-ring structure of $R$ is a $\lambda$-ring 
		structure.
	\end{thm}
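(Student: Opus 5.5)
The strategy is to reduce to the rational ring $R_{\mathbb Q}=R\otimes_{\mathbb Z}\mathbb Q$. There the Adams operations determine the $\lambda$-operations by Newton's formulas. The conditions (\ref{eq:1.23}) and (\ref{eq:1.34}) then become universal polynomial identities that can be checked "on sums of line elements". Torsion freeness is what lets us pass back from $R_{\mathbb Q}$ to $R$.

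First, I would note that since $R$ is torsion free, the natural map $R\to R_{\mathbb Q}$ is injective. Hence it suffices to prove (\ref{eq:1.23}) and (\ref{eq:1.34}) after tensoring with $\mathbb Q$. Second, taking logarithmic derivatives in (\ref{eq:1.19}) gives the Newton identities
\[
n\lambda^n(a)=\sum_{k=1}^{n}(-1)^{k-1}\Psi^k(a)\lambda^{n-k}(a).
\]
Over $\mathbb Q$ this expresses $\lambda^n(a)$ as a universal polynomial $Q_n(\Psi^1(a),\dots,\Psi^n(a))$ with rational coefficients, namely the expression of the elementary symmetric function $e_n$ in terms of the power sums $p_1,\dots,p_n$. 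Also, (\ref{eq:1.01}) implies that each $\Psi^k$ is additive, since $\log\lambda_t$ turns products into sums.

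The key step is to verify (\ref{eq:1.23}). I would apply $Q_n$ to $\Psi^k(xy)=\Psi^k(x)\Psi^k(y)$ and compare with $P_n(\lambda^\bullet(x);\lambda^\bullet(y))$. Both sides are determined by symmetric function identities. Take formal variables $\xi_i,\zeta_j$ with $s_i=e_i(\xi)$ and $\sigma_i=e_i(\zeta)$. By definition $P_n(s;\sigma)=e_n(\{\xi_i\zeta_j\})$, and the power sums satisfy $p_k(\{\xi_i\zeta_j\})=p_k(\xi)p_k(\zeta)$. Hence, in $\mathbb Q[\xi,\zeta]^{\text{sym}}$,
\[
P_n\bigl(Q_\bullet(p(\xi));Q_\bullet(p(\zeta))\bigr)=Q_n\bigl(p_\bullet(\xi)p_\bullet(\zeta)\bigr).
\]
With enough variables the power sums $p_1,\dots,p_N$ are algebraically independent over $\mathbb Q$. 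So this is a polynomial identity in independent variables $u_k=p_k(\xi)$ and $v_k=p_k(\zeta)$, and it can be specialized to $u_k=\Psi^k(x)$ and $v_k=\Psi^k(y)$ in $R_{\mathbb Q}$. That gives (\ref{eq:1.23}).

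Next comes (\ref{eq:1.34}). First compute $\Psi^k(\lambda^m(x))$: writing $\lambda^m(x)=Q_m(\Psi^\bullet(x))$ and using additivity, multiplicativity, $\Psi^k(1)=1$, $\mathbb Q$-linearity and $\Psi^k\Psi^j=\Psi^{kj}$, we get $\Psi^k(\lambda^m(x))=Q_m(\Psi^k(x),\Psi^{2k}(x),\dots,\Psi^{mk}(x))$. On the symmetric function side this matches $p_k(\{\xi_{i_1}\cdots\xi_{i_m}\})=e_m(\xi_1^k,\xi_2^k,\dots)=Q_m(p_k,p_{2k},\dots,p_{mk})$. The identity $e_n(\{\xi_{i_1}\cdots\xi_{i_m}\})=P_{n,m}(e_\bullet(\xi))$, rewritten via $Q$ in the independent power sums, then specializes to $\lambda^n(\lambda^m(x))=P_{n,m}(\lambda^\bullet(x))$. (The indices in (\ref{eq:1.34}) are swapped relative to $P_{n,m}$; I will follow the definition of $\tilde\lambda$.)

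I expect the main obstacle to be bookkeeping rather than any single hard step. One point is justifying the specialization: a polynomial identity among power sums in finitely many variables only holds up to a degree bound, so I would take the number of variables at least $nm$ so the relevant $p_k$ are independent. Another point is carefully checking that the $\Psi^k$ extend $\mathbb Q$-linearly to $R_{\mathbb Q}$ with the same properties.
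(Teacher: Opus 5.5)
The paper gives no proof of this statement. It quotes the criterion from \cite[p.~49]{LNM308} and \cite[\S V Appendice]{BerthelotTh}, and uses it only to show that $\Gamma(B)$ and $Z^{\mathrm{even}}(B,\R)$ (and later $\Gamma_g(B)$) are $\lambda$-rings. Your proposal therefore supplies an argument the paper omits, and it is correct.

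\begin{itemize}
\item Torsion-freeness enters exactly where it should. It makes $R\to R\otimes_{\mathbb Z}\mathbb Q$ injective, and it lets you divide the Newton identities by $n$, so that $\lambda^n(a)=Q_n(\Psi^1(a),\dots,\Psi^n(a))$ in $R\otimes\mathbb Q$.
\item Additivity of $\Psi^k$ follows from the pre-$\lambda$ axiom via $f\mapsto f'/f$, which needs no division. Together with multiplicativity, it makes $\Psi^k\otimes\mathrm{id}$ a $\mathbb Q$-algebra endomorphism of $R\otimes\mathbb Q$.
\item (\ref{eq:1.23}) and (\ref{eq:1.34}) then reduce to identities among algebraically independent power sums. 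You set these up correctly with $q,r\ge n$, respectively $q\ge nm$, variables; for (\ref{eq:1.23}) you also use that the two families $p_\bullet(\xi)$, $p_\bullet(\zeta)$ in disjoint variables are jointly independent.
\end{itemize}

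Compared with the bare citation, your route shows which hypothesis does what. Multiplicativity of $\Psi^k$ yields (\ref{eq:1.23}), and $\Psi^k\Psi^j=\Psi^{kj}$ yields the composition axiom.

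Two small remarks.

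First, make the case $\lambda_t(\lambda^0(x))=\tilde\lambda^0(\lambda_t(x))$ explicit. This is $\lambda_t(1)=1+t$, and it is part of Definition \ref{defn:1.06}. Your uniform argument with $Q_0=1$ covers it, but state it directly. Since $\exp\bigl(\sum_{k\ge1}(-1)^{k-1}t^k/k\bigr)=1+t$, you get $\lambda^n(1)=Q_n(\Psi^1(1),\dots,\Psi^n(1))=Q_n(1,\dots,1)=0$ for $n\ge 2$. This is where the hypothesis $\Psi^n(1)=1$ is actually used.

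Second, there is no index swap in (\ref{eq:1.34}). By (\ref{eq:1.022}), the coefficient of $t^m$ in $\tilde\lambda^n(\lambda_t(x))$ is $P_{m,n}(\lambda^1(x),\dots,\lambda^{mn}(x))$, which is exactly what (\ref{eq:1.34}) asserts. Your identity $\lambda^n(\lambda^m(x))=P_{n,m}(\lambda^\bullet(x))$ is the same statement with the letters $m,n$ renamed.
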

	
	
	\section{Differential K-theory}\label{s02}
	
	Let $B$ be a closed manifold.
	Let $\underline{E}=(E, h^E, \nabla^E)$ be a triple which
	consists of a complex vector bundle $E$ over $B$, 
	a Hermitian metric
	$h^E$ on $E$ and a Hermitian connection $\nabla^E$ on $(E, h^E)$.
	As in \cite{LM20}, we call such $\underline{E}$ a geometric
	triple on $B$.
	
	For a geometric triple $\underline{E}$ on $B$,
	denote by $R^E:=(\nabla^E)^2\in \Omega^2(B, \mathrm{End}(E))$ the curvature on $E$.
	The Chern character form (cf. e.g., \cite[(1.22)]{Z01})
	\begin{align}
		\ch(\underline{E}):=\mathrm{Tr}\left[\exp\left(\frac{i}{2\pi}
		R^E\right) \right]\in \Omega^{\mathrm{even}}(B,\R).
	\end{align}
	For two geometric triples $\underline{E_0}$ and 
	$\underline{E_1}$, which have the same underlying
	vector bundle $E$, for the obvious projection
	$\pi:B\times \R\to B$, there exists geometric triple
	$\underline{\pi^*E}$ on $B\times \R$, such that
	\begin{align}\label{eq:2.2}
		\underline{\pi^*E}|_{B\times \{j\}}=\underline{E_j},
		\quad j=0, 1.
	\end{align}
	For $\alpha=\alpha_0+ds\wedge \alpha_1\in \Lambda^{\bullet}
	(T^*(B\times \R))$ with $\alpha_0, \alpha_1
	\in \Lambda^{\bullet}(T^*B)$, we denote by 
	$\{\alpha\}^{ds}:=\alpha_1$.
	Thus the Chern-Simons form 	(cf. e.g., 
	\cite[Definition B.5.3 and Theorem B.5.4]{MM07})
	\begin{align}
		\widetilde{\ch}\left(\underline{E_0}, \underline{E_1}\right):=
		\int_0^1\{\ch(\underline{\pi^*E}) \}^{ds}ds
		\in \Omega^{\mathrm{odd}}(B,\R)/\mathrm{Im}\,d.
	\end{align}
	It only depends on $\nabla^{E_j}$, $j=0,1$,
	and satisfies
	\begin{align}\label{eq:2.04}
		d\,\widetilde{\ch}\left(\underline{E_0}, \underline{E_1}\right)
		=\ch(\underline{E_1})-\ch(\underline{E_0}).
	\end{align}
	
	\begin{defn}\cite[Definition 2.16]{FreedLott10}\label{defn:2.12} 
		A cycle for the 
		differential K-group
		of $B$ is a 
		pair $(\underline{E}, \phi)$ where 
		$\underline{E}=(E, h^E, \nabla^E)$
		is a geometric triple on $B$ and $\phi$
		is an element in $\Omega^{\mathrm{odd}}(B, \R)/\Im \, d$.
		We say two cycles $\left(\underline{E_1}, \phi_1\right)$ 
		and $\left(\underline{E_2}, 
		\phi_2\right)$ are equivalent if there exist a 
		geometric triple $\underline{E_3}=(E_3, h^{E_3}, \nabla^{E_3})$ 
		and a vector bundle isomorphism over $B$
		\begin{align}\label{eq:2.082}
			\Phi:E_1\oplus E_3\rightarrow E_2\oplus E_3
		\end{align}
		such that
		\begin{align}\label{eq:2.083}
			\widetilde{\ch}\left(\underline{E_1}\oplus\underline{E_3}, 
			\Phi^{*} \left(\underline{E_2}\oplus
			\underline{E_3}\right)\right)
			=\phi_2-\phi_1.
		\end{align} 
		We define the sum of the cycles in the natural way and denote by	
		$\widehat{K}_{}^0(B)$  
		the Grothendieck group of equivalence classes of cycles.
	\end{defn}

	
	Denote by $[\underline{E}, \phi]\in \widehat{K}^0_{}(B)$
	the equivalence class of a cycle $(\underline{E}, \phi)$.
	For $[\underline{E}, \phi], [\underline{F}, \psi]\in 
	\widehat{K}^0_{}(B)$, 
	\begin{align}\label{eq:2.084}
		(\underline{E}, \phi)\cup(\underline{F}, \psi)
		:=(\underline{E}\otimes \underline{F}, 
		\ch(\underline{E})\wedge\psi+\phi\wedge\ch(\underline{F})
		-d\phi\wedge\psi  )
	\end{align} 
	induces a  
	well-defined,  commutative and
	associative product on $\widehat{K}^0_{}(B)$ (cf. \cite[Proposition 4.79]{K08}, also \cite[Lemma 2.15]{LM20}). 
	Thus $(\widehat{K}^0_{}(B), +, \cup)$ is a commutative ring
	with unity $1:=[\underline{\mathbb{C}}, 0]$, where
	$\underline{\mathbb{C}}$ denotes the trivial line bundle
	over $B$ with trivial metric and connection.
	
	From \cite[(2.21)]{FreedLott10}, there exists an exact 
	sequence
	\begin{align}\label{d034}
		K^{1}(B)\overset{\footnotesize{\ch}}{\longrightarrow} 
		\Omega^{\mathrm{odd}}(B, \R)/\Im\, d
		\overset{\footnotesize{a}}{\longrightarrow} \widehat{K}^{0}(B) 
		\overset{\footnotesize{I}}{\longrightarrow} K^{0}(B)
		\longrightarrow 0,
	\end{align}		
	where 
	\begin{align}
		\ch: K^{1}(B)\to H^{\mathrm{odd}}(B,\R)\subset 
		\Omega^{\mathrm{odd}}(B, \R)/\Im\, d
	\end{align}
	is the canonical Chern character map for $K^1$-group
	(cf. also \cite[\S 3.1]{LM20}) and
	\begin{align}\label{eq:2.10}
		a(\phi):=[\underline{0}, \phi], \quad I([\underline{E},\phi])
		:=[E]\in K^0(B).
	\end{align}
	
	For a complex vector bundle $E$ over $B$, let $\mathbb{P}(E)$
	be the projective bundle over $B$ associated with $E$,
	which is the quotient space obtained from $E\backslash 
	B$ by modulo the action of the complex number multiplication. Denote by $\sigma: \mathbb{P}(E)\to B$
	the natural projection. Then it induces a pull-back
	map 
	\begin{align}\label{eq:2.08a}
		\sigma^*:\widehat{K}^0(B)\to \widehat{K}^0(\mathbb{P}(E))
	\end{align} 
	in a natural way, which is a pre-$\lambda$-ring homomorphism by (\ref{eq:2.084}) (cf. \cite[Lemma 2.7]{LM20}).
	
	The splitting principle is a very useful tool in usual
	K-theory. The following result guarantees that the splitting
	principle can still be applied in differential K-theory.

	\begin{thm}\label{prop:2.2}
		The homomorphism $\sigma^*$ in (\ref{eq:2.08a}) is injective.
	\end{thm}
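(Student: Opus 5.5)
Since $\sigma^*$ commutes with the maps $a$ and $I$ of the exact sequence (\ref{d034}) on $B$ and on $\mathbb{P}(E)$, the plan is to reduce injectivity of $\sigma^*$ on $\widehat{K}^0$ to injectivity statements in topological K-theory and in de Rham cohomology, and then do a diagram chase. The two known inputs are: (i) $\sigma^*:K^\bullet(B)\to K^\bullet(\mathbb{P}(E))$ is injective (indeed $K^\bullet(\mathbb{P}(E))$ is a free $K^\bullet(B)$-module with basis $1,L,\dots,L^{r-1}$, $L$ the tautological line bundle, by the projective bundle theorem); (ii) $\sigma^*:\Omega^\bullet(B)\to\Omega^\bullet(\mathbb{P}(E))$ is injective on forms. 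Moreover, integration along the fibre against a suitable closed form gives a left inverse on forms modulo exact forms. Concretely, choose a metric and connection on $E$, which induce a closed form $\omega=c_1(\underline{L}^*)$ restricting on each fibre to the Fubini--Study class. Then $\sigma_*(\sigma^*\phi\wedge\omega^{r-1})=\phi$ with $\sigma_*$ the fibre integral, and $\sigma_*$ commutes with $d$. Hence $\sigma^*:\Omega^{\mathrm{odd}}(B)/\Im d\to\Omega^{\mathrm{odd}}(\mathbb{P}(E))/\Im d$ is injective.

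The chase goes as follows. Let $x\in\widehat K^0(B)$ with $\sigma^*x=0$. Then $\sigma^*I(x)=I(\sigma^*x)=0$, so $I(x)=0$ by (i), and therefore $x=a(\phi)$ for some $\phi\in\Omega^{\mathrm{odd}}(B)/\Im d$. Now $a(\sigma^*\phi)=\sigma^*a(\phi)=0$, so by exactness on $\mathbb{P}(E)$ we get $\sigma^*\phi=\ch(y)$ for some $y\in K^1(\mathbb{P}(E))$. In particular $\sigma^*\phi$ is closed, and by (ii) (injectivity on forms, with $d\sigma^*\phi=\sigma^*d\phi$) $\phi$ is closed as well. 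We must show $\phi\in\ch(K^1(B))$. Write $y=\sum_{j=0}^{r-1}\sigma^*y_j\cdot L^j$ with $y_j\in K^1(B)$, using the projective bundle theorem for $K^1$. Then in cohomology
\[
\sigma^*[\phi]=\sum_{j}\sigma^*\ch(y_j)\,\ch(L)^j .
\]
By Leray--Hirsch, $H^\bullet(\mathbb{P}(E),\R)$ is a free $H^\bullet(B,\R)$-module with basis $1,x_L,\dots,x_L^{r-1}$, where $x_L=c_1(L)$. Expanding $\ch(L)^j=e^{jx_L}$, the coefficient of $1$ is $\sum_j\ch(y_j)$ plus contributions from reducing $x_L^k$ for $k\ge r$ via the relation $\sum c_i(E)x_L^{r-i}=0$. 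These contributions are combinations of $\ch(y_j)$ times polynomials in Chern classes of $E$.

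This is the main obstacle: the Leray--Hirsch coefficients are real multiples of $\ch(y_j)$ times characteristic classes, and it is not obvious that they form the Chern character of an integral K-class. To avoid this, I would apply instead the K-theoretic pushforward $\sigma_!:K^1(\mathbb{P}(E))\to K^1(B)$ for the fibrewise Dolbeault operator. By the projective bundle formula, $\sigma_!(\sigma^*z)=z$, since the fibre $\mathbb{CP}^{r-1}$ has Todd genus $1$. By Riemann--Roch in cohomology, $\ch(\sigma_!y)=\sigma_*(\ch(y)\,\mathrm{Td}(T^V))$. Applying this to $y$ gives
\[
\ch(\sigma_! y)=\sigma_*\bigl(\sigma^*[\phi]\,\mathrm{Td}(T^V)\bigr)=[\phi]\,\sigma_*\mathrm{Td}(T^V)=[\phi],
\]
since $\sigma_*\mathrm{Td}(T^V)=1$ (the Todd genus of $\mathbb{CP}^{r-1}$, fibrewise). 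Therefore $[\phi]=\ch(\sigma_!y)\in\ch(K^1(B))$, hence $x=a(\phi)=0$. In the write-up I would check carefully that this identity holds as classes modulo $\Im d$, which it does because $\phi$ is closed.
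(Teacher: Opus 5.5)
Your proof is correct. Up to the point where $\sigma^*\phi=\ch(y)$ with $d\phi=0$ it is the same diagram chase as the paper's; the two arguments part ways exactly at the obstacle you identified.

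The paper does not abandon the decomposition $y=\sum_{i=0}^{r-1}\sigma^*(F_i)[H]^i$. Instead it shows that $1,e^{c_1(H)},\dots,e^{(r-1)c_1(H)}$ is itself an $H^\bullet(B,\R)$-basis of $H^\bullet(\mathbb{P}(E),\R)$. The reason is that the transition matrix from $1,c_1(H),\dots,c_1(H)^{r-1}/(r-1)!$ is an invertible real Vandermonde matrix plus a nilpotent matrix with entries of positive degree in $H^\bullet(B,\R)$. Comparing coefficients in $\sigma^*[\phi]=\sum_i\sigma^*\ch(F_i)\,e^{ic_1(H)}$ then gives directly $\ch(F_i)=0$ for $i>0$ and $[\phi]=\ch(F_0)$. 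So the coefficients you worried about never need to be recognized as Chern characters of integral classes.

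You instead produce the preimage as $\sigma_!y$ via the families Riemann--Roch formula. The comparison is as follows:
\begin{itemize}
\item The paper's route is entirely elementary (Leray--Hirsch plus linear algebra), but it relies on the explicit $K^1(B)$-module structure of $K^1(\mathbb{P}(E))$.
\item Your route needs the families index theorem for the fibrewise Dolbeault operator. In exchange it is more conceptual: it never uses the basis decomposition of $y$ (that paragraph can be dropped), and it applies to any fibration with a transfer $\sigma_!$ satisfying $\sigma_!\sigma^*=\mathrm{id}$ and compatible with a cohomological transfer.
\item Your step (ii), injectivity of $\sigma^*$ on $\Omega^{\mathrm{odd}}/\Im d$ via fibre integration, is true but unnecessary. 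Injectivity on forms already yields $d\phi=0$, as in the paper.
\end{itemize}

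One justification should be sharpened. The Todd genus of $\mathbb{CP}^{r-1}$ being $1$ gives only the degree-zero part of $\sigma_*\mathrm{Td}(T^V)$, and only the rank of $\sigma_!1$. The correct reason is that fibrewise $H^{0,q}(\mathbb{CP}^{r-1})=0$ for $q>0$ and $H^{0,0}$ consists of the constants, so $\sigma_!1$ is the trivial line bundle. Riemann--Roch then gives $\sigma_*\mathrm{Td}(T^V)=\ch(\sigma_!1)=1$ in all degrees.
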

	\begin{proof}
		From (\ref{d034}), easy to see that
		the pull-back map $\sigma^*$
		induces a commutative diagram
		\begin{equation}\label{eq:2.12}
			\begin{split}
				\xymatrix{
					K_{}^{1}(B) \ar[d]^{\sigma^*} \ar[r]^-{\ch}
					&\Omega^{\mathrm{odd}}(B, \R)/\Im\, d
					\ar[d]^{\sigma^*} \ar[r]^-{a}
					&\widehat{K}^{0}(B)\ar[d]^{\sigma^*} 
					\ar[r]^-{I} &K^{0}(B)\ar[d]^{
						\sigma^*} \ar[r] &0\\
					K^{1}(\mathbb{P}(E))  \ar[r]^-{\ch}
					&\Omega^{\mathrm{odd}}(\mathbb{P}(E), \R)/\Im\, d
					\ar[r]^-{a} 
					&\widehat{K}^{0}(\mathbb{P}(E))\ar[r]^-{I} 
					&K_{}^{0}(\mathbb{P}(E))\ar[r] &0,
				}
			\end{split}
		\end{equation}
		in which the rows are exact sequences.
		By \cite[Corollary 2.7.9]{A67}, $\sigma^*:K^*(B)\to K^*(\mathbb{P}(E))$ is injective.
		
		For $\hat{x}\in \widehat{K}^0(B)$, if $\sigma^*(\hat{x})=0$,
		from (\ref{eq:2.12}), we have 
		\begin{align}
			\sigma^*\circ I(\hat{x})
			=I\circ \sigma^*(\hat{x})=0.
		\end{align}
		Since $\sigma^*|_{K^0(B)}$
		is injective, we have $I(\hat{x})=0$. Thus by (\ref{d034}),
		there exists $\phi \in \Omega^{\mathrm{odd}}(B, \R)/\Im\, d$
		such that $\hat{x}=a(\phi)$. Then by (\ref{eq:2.12}),
		\begin{align}
			a(\sigma^*(\phi))=\sigma^*(a(\phi))=\sigma^*(\hat{x})=0.
		\end{align}
		Thus there exists $y\in K^1(\mathbb{P}(E))$ such that
		\begin{align}\label{eq:2.15a}
			\ch(y)=\sigma^*\phi.
		\end{align} 
		
		For any $m\in \mathbb{P}(E)$, it represents a one-dimensional subspace
		in the fiber of $E$. The union of these lines form
		a complex line bundle over $\mathbb{P}(E)$. We denote by $H$ the dual
		bundle of this line bundle. From \cite[Corollary 2.7.9]{A67},
		$K^1(\mathbb{P}(E))$ is a free $K^1(B)$-module with basis
		$1, [H], [H]^2,\cdots, [H]^{r-1}$ with relation
		$\sum_{i=0}^r(-1)^i[H]^i[\sigma^*(\Lambda^i(E))]=0$,
		where $r$ is the rank of $E$.
		Thus there exists $F_i\in K^1(B)$ such that
		\begin{align}
			y=\sum_{i=0}^{r-1}\sigma^*(F_i)\cdot [H]^{i}.
		\end{align}
		This implies that
		\begin{align}\label{eq:2.16}
			\sigma^*\phi=\ch(y)=\sum_{i=0}^{r-1}\sigma^*\ch(F_i)
			\,e^{ic_1(H)}.
		\end{align}
In particular, we get $\sigma^*d\phi=d\sigma^*\phi=0\in 
\Omega^{\bullet}(\mathbb{P}(E),\R)$, thus $d\phi=0$, i.e.,
$\phi\in H^{\mathrm{odd}}(B,\R)$.
		
Note that by the Leray-Hirsch theorem, $H^{\bullet}(\mathbb{P}(E),\R)$ is a free $H^{\bullet}(B, \R)$-module with basis $1, c_1(H), \cdots, c_1(H)^{r-1}$ with the relation
\begin{align}\label{eq:2.17b}
c_1(H)^r+\sum_{j=0}^{r-1}c_1(H)^jc_{r-j}(E)=0,
\end{align}		
with $c_j(E)$ the $j$-th Chern class of $E$.

We claim that $H^{\bullet}(\mathbb{P}(E),\R)$ is also a free $H^{\bullet}(B, \R)$-module with basis $1, e^{c_1(H)}, \cdots, e^{(r-1)c_1(H)}$. In fact, by (\ref{eq:2.17b}), for any $j\in \N^*$, $e^{jc_1(H)}$ is a linear combination of $1, c_1(H), \cdots, c_1(H)^{r-1}$
with coefficients in $H^{\bullet}(B,\R)$, and there exist $b_{kj}\in H^{\bullet}(B,\R)$ with $\deg b_{kj}>0$ such that
$e^{0c_1(H)}=1$ and for $j\geq 1$,
\begin{align}\label{eq:2.19b}
e^{jc_1(H)}=\sum_{k=0}^{r-1}j^k\frac{c_1(H)^k}{k!}+
\sum_{k\geq r}j^k\frac{c_1(H)^k}{k!}
=\sum_{k=0}^{r-1}\left(j^k+b_{kj}\right)\frac{c_1(H)^k}{k!}.
\end{align}		
Let $\mathbf{A}=\mathbf{V}+\mathbf{B}$ be the matrix with $
\mathbf{V}=(j^k)_{k,j=0}^{r-1}$ and $\mathbf{B}=(b_{kj})_{k,j=0}^{r-1}$. Set
\begin{align}
{\bf e}=\left(1, e^{c_1(H)}, \cdots, e^{(r-1)c_1(H)} \right),
 \quad
{\bf f}=\left(
	1, \frac{c_1(H)}{1!}, 	\cdots,	\frac{c_1(H)^{r-1}}{(r-1)!}
\right).
\end{align}
Then from (\ref{eq:2.19b}), we have ${\bf e}={\bf f}\mathbf{A}$. 
Since $\mathbf{V}$ is the Vandermonde matrix which is invertible
and $\mathbf{B}$ is nilpotent, $\mathbf{A}$ is 
invertible in $\mathrm{M}_n(H^{\bullet}(B, \R))$ and ${\bf f}={\bf e}\mathbf{A}^{-1}$.
We obtain the claim. 
		
From (\ref{eq:2.16}) and the claim above, we know that
for any $i>0$, $\sigma^*\ch(F_i)=0$ and 
$\sigma^*\phi=\sigma^*\ch(F_0)$. This implies
$\phi=\ch(F_0)$.		
%
%
%
%
%
%
%
%
%
%
%
%
%
%
%
		Thus by (\ref{d034}),
		\begin{align}
			\hat{x}=a(\phi)=a\circ \ch(F_0)
			=0.
		\end{align}
		So $\sigma^*$ in (\ref{eq:2.08a}) is injective.
		The proof of Theorem \ref{prop:2.2} is complete.
	\end{proof}

	We denote by 
	$Z^{\mathrm{even}}(B,\R)$ the vector space of even degree closed 
	differential forms on $B$. Consider the vector space
	\begin{align}
		\Gamma(B):=Z^{\mathrm{even}}(B,\R)\oplus (\Omega^{\mathrm{odd}}(B,\R)/
		\mathrm{Im}\,d).
	\end{align}
	We give degree $l\geq 0$ to 
	$Z^{2l}(B,\R)\oplus \left(\Omega^{2l-1}(B,\R)/\Im \, d\right)$
	with $\Omega^{-1}(\cdot)=\{0\}$. 
	We define the multiplication on $\Gamma(B)$ by the formula
	\begin{align}\label{eq:2.008}
		(\omega_1,\phi_1)*(\omega_2, \phi_2)
		:=(\omega_1\wedge\omega_2, 
		\omega_1\wedge\phi_2+\phi_1\wedge\omega_2
		-d\phi_1\wedge\phi_2),
	\end{align}
	which is commutative and associative. 
	Then $\Gamma(B)$ is a ring with unity $(1,0)$ which is torsion free. 
	We define the Adams operations 
	$\Psi_{\Gamma}^k:\Gamma(B)\rightarrow \Gamma(B)$ 
	for $k\in \N$ by
	\begin{align}\label{eq:2.009}
		\Psi_{\Gamma}^k(\alpha,\beta)=(k^l\alpha,k^l\beta)
		\quad\text{for}\quad 
		(\alpha,\beta)\in Z^{2l}(B,\R)\oplus 
		(\Omega^{2l-1}(B,\R)/\Im \, d).
	\end{align}
	Then the relation
	\begin{align}\label{eq:2.004}
		\lambda_t(x)=\sum_{n\geq 
			0}\lambda_{\Gamma}^n(x)t^n:=
		\exp\left(\sum_{k=1}^{\infty}\frac{(-1)^{k-1}
			\Psi_{\Gamma}^k(x)t^k}{k}\right)
	\end{align}
	gives a pre-$\lambda$-ring structure on $\Gamma(B)$ 
	(cf. \cite[(2.4)]{LM20}). 
	From (\ref{eq:2.008})-(\ref{eq:2.004}), for $\omega\in 
	Z^{\mathrm{even}}(B,\R)$, $\phi\in 
	\Omega^{\mathrm{odd}}(B,\R)/\Im \, d$, we have
	\begin{align}\label{eq:2.8a}
		\left[ \lambda_{\Gamma}^k(\omega, 0)\right]_{\mathrm{odd}}=0,
		\quad \left[ \lambda_{\Gamma}^k(0, \phi)\right]_{\mathrm{even}}=0,
	\end{align}			
	where $[\cdot]_{\mathrm{odd}}$ (resp. $[\cdot]_{\mathrm{even}}$) is the 
	component of $\Gamma(B)$ in 
	$\Omega^{\mathrm{odd}}(B,\R)/\Im \, d$ (resp. $Z^{\mathrm{even}}(B,\R)$).	
	
	For $\alpha\in Z^{2l}(B,\R)$, the map $\Psi_Z^k(\alpha)
	:=k^l\alpha$ also defines Adams operations $\Psi_Z^k:Z^{\mathrm{
			even}}(B,\R)\to Z^{\mathrm{
			even}}(B,\R)$ for $k\in \N$. It induces 
	a pre-$\lambda$-ring structure $\lambda_Z^n$ on $Z^{\mathrm{
			even}}(B,\R)$ as in (\ref{eq:2.004}).
	From \cite[Lemma 2.3]{LM20}, the restriction map 
	$p:\Gamma(B)\to Z^{\mathrm{
			even}}(B,\R)$ is a homomorphism of pre-$\lambda$-rings.
	
	\begin{lemma}\label{lem:2.1}
		The pre-$\lambda$-ring structures on $\Gamma(B)$ and $Z^{\mathrm{
				even}}(B,\R)$ are $\lambda$-ring structures.
		Moreover the restriction map $p:\Gamma(B)\to Z^{\mathrm{
				even}}(B,\R)$ is a homomorphism of $\lambda$-rings.
	\end{lemma}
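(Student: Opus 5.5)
The plan is to apply Theorem \ref{thm:1.07} to both $\Gamma(B)$ and $Z^{\mathrm{even}}(B,\R)$. Both rings are $\R$-vector spaces, hence torsion free. By construction (\ref{eq:2.004}), the Adams operations attached to the pre-$\lambda$-structure via (\ref{eq:1.19}) are exactly $\Psi^k_\Gamma$ (resp. $\Psi^k_Z$). Indeed, taking $\frac{d}{dt}\log$ of (\ref{eq:2.004}) gives
\begin{align*}
\frac{d}{dt}\log\lambda_t(x)=\sum_{k\geq 1}(-1)^{k-1}\Psi^k_\Gamma(x)t^{k-1},
\end{align*}
which is (\ref{eq:1.19}) after the shift $n=k-1$. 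So it remains to check the three identities (\ref{eq:1.21}).

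Multiplicativity is a degree count. Write elements as sums of homogeneous components of degree $l$ in the grading on $\Gamma(B)$. The product (\ref{eq:2.008}) is graded: if $(\omega_1,\phi_1)$ has degree $l_1$ and $(\omega_2,\phi_2)$ has degree $l_2$, then each term in the product has degree $l_1+l_2$. For instance, $d\phi_1\wedge\phi_2$ has form degree $2l_1+2l_2-1$, so it lies in the degree $l_1+l_2$ piece. Hence $\Psi^k_\Gamma$, which multiplies the degree-$l$ component by $k^l$, satisfies $\Psi^k_\Gamma(ab)=k^{l_1+l_2}ab=\Psi^k_\Gamma(a)\Psi^k_\Gamma(b)$ on homogeneous elements, and by bilinearity on all elements. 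The unit $(1,0)$ has degree $0$, so $\Psi^k_\Gamma(1)=1$. Also $\Psi^n_\Gamma\Psi^m_\Gamma$ acts on degree $l$ by $n^lm^l=(nm)^l$. The same computation, with no odd parts, works for $Z^{\mathrm{even}}(B,\R)$. Theorem \ref{thm:1.07} then gives the $\lambda$-ring structures.

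For the second statement, recall that a homomorphism of $\lambda$-rings is just a ring homomorphism commuting with the $\lambda^n$. The map $p$ is already known from \cite[Lemma 2.3]{LM20} to be a homomorphism of pre-$\lambda$-rings, which is the same condition. If one wants to see it directly: $p(\omega,\phi)=\omega$ is clearly multiplicative, since the even part of the product (\ref{eq:2.008}) is $\omega_1\wedge\omega_2$. It also satisfies $p\circ\Psi^k_\Gamma=\Psi^k_Z\circ p$, and both $\lambda$-structures are given by the same universal formula (\ref{eq:2.004}) in terms of the Adams operations, so $p$ commutes with $\lambda^n$.

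The only point requiring care is convergence and well-definedness of the exponential in (\ref{eq:2.004}), and the fact that the Adams operations recovered via (\ref{eq:1.19}) coincide with $\Psi_\Gamma^k$. In each fixed $t$-degree the coefficient is a polynomial in the $\Psi^k_\Gamma(x)$ with rational coefficients, which is meaningful over $\R$, and identity (\ref{eq:1.19}) then follows formally. I do not expect any step to be a genuine obstacle.
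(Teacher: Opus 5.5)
Your proposal is correct and follows the same route as the paper: torsion-freeness plus the identities (\ref{eq:1.21}) for $\Psi^k_\Gamma$ and $\Psi^k_Z$ feed into Theorem \ref{thm:1.07}, and $p\circ\Psi^k_\Gamma=\Psi^k_Z\circ p$ together with (\ref{eq:2.004}) shows that $p$ is a $\lambda$-ring homomorphism. You also supply two details the paper leaves implicit: that (\ref{eq:2.004}) recovers $\Psi^k_\Gamma$ through (\ref{eq:1.19}), and that the product (\ref{eq:2.008}) is graded, so $\Psi^k_\Gamma$ is multiplicative.
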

	\begin{proof}
		Since $\Gamma(B)$ (resp. $Z^{\mathrm{
				even}}(B,\R)$) is torsion free and the Adams operations
		$\Psi_{\Gamma}^k$ (resp. $\Psi_Z^k$) satisfy 
		(\ref{eq:1.21}),
		from Theorem \ref{thm:1.07}, the pre-$\lambda$-ring
		structure on $\Gamma(B)$ (resp. $Z^{\mathrm{
				even}}(B,\R)$) is a $\lambda$-ring structure.
		Moreover, since $p\circ \Psi_{\Gamma}^k=\Psi_Z^k\circ p$,
		by (\ref{eq:2.004}) and the corresponding definition
		for $\lambda_Z^n$, the restriction map $p:\Gamma(B)\to Z^{\mathrm{
				even}}(B,\R)$ is a homomorphism of $\lambda$-rings.
		The proof of Lemma \ref{lem:2.1} is complete.
	\end{proof}

	Let $(\underline{E},\phi)$ and $(\underline{F},\psi)$ be cycles of 
	$\widehat{K}^0_{}(B)$. Then by (\ref{eq:2.084}) and (\ref{eq:2.008}),
	\begin{align}\label{eq:2.040}
		(\underline{E}, \phi)\cup(\underline{F}, \psi)
		=\Big(\underline{E}\otimes \underline{F}, 
		\big[(\ch(\underline{E}),\phi)*(\ch(\underline{F}),\psi)
		\big]_{\mathrm{odd}}\Big).
	\end{align} 
	Note that from \cite[Lemma 2.3, (2.11), (2.33) and (2.47)]{LM20}, we have
	\begin{align}\label{eq:2.054}
		\left[\lambda_{\Gamma}^k(\ch(\underline{E}),\phi)
		\right]_{\mathrm{even}}=\ch(\Lambda^k(\underline{E})),
	\end{align}
	here $\Lambda^k(\underline{E})$ denotes the $k$-th exterior 
	algebra bundle of $E$ with corresponding 
	metric and connection induced  from $\underline{E}$.
	Set
	\begin{align}\label{eq:2.043} 
		\lambda^k(\underline{E},\phi):=\left(\Lambda^k(\underline{E}), 
		\left[\lambda_{\Gamma}^k(\ch(\underline{E}),\phi)\right]_{
			\mathrm{odd}}\right).
	\end{align}	
	By (\ref{eq:2.8a})  and (\ref{eq:2.043}),
	\begin{align}\label{eq:2.10a}
		\lambda^k(\underline{0}, \phi)=\left(\underline{0}, 
		\left[\lambda_{\Gamma}^k(0,\phi)\right]_{
			\mathrm{odd}}\right),\quad 
		\lambda^k(\underline{E},0):=\left(\Lambda^k(\underline{E}), 
		0\right).
	\end{align}
	From the proof of \cite[Theorem 2.6]{LM20}, the map
	$\lambda^k$ in (\ref{eq:2.043}) induces a well-defined
	map 
	\begin{align}
		\lambda^k:\widehat{K}_{}^0(B)\to \widehat{K}_{}^0(B),
	\end{align}
	that is, if $[\underline{E}, \phi]=[\underline{F}, \psi]
	\in \widehat{K}^0(B)$, we have
	\begin{align}\label{eq:2.28a}
		\left[ \lambda^k(\underline{E},\phi)\right]=
		\left[ \lambda^k(\underline{F},\psi)\right]
		\in \widehat{K}^0(B).
	\end{align}
	Moreover by \cite[Theorem 2.6]{LM20}, (\ref{eq:2.043}) induces a 
	pre-$\lambda$-ring structure on $\widehat{K}_{}^0(B)$.

	\
	
	The main result of this paper is as follows.

	\begin{thm}\label{thm:1.09}
		The pre-$\lambda$-ring structure on $\widehat{K}_{}^0(B)$  
		defined in (\ref{eq:2.043})
		is a $\lambda$-ring structure.
	\end{thm}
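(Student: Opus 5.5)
The plan is to reduce, via the splitting principle (Theorem \ref{prop:2.2}), to elements built from Hermitian line bundles with connection and from forms in the image of $a$. For these elements the two identities (\ref{eq:1.23}) and (\ref{eq:1.34}) will be transported from the $\lambda$-ring $\Gamma(B)$ of Lemma \ref{lem:2.1}. Two background facts are used throughout. Since $\widehat{K}^0(B)$ is a pre-$\lambda$-ring, $\lambda_t$ is additive in the sense of (\ref{eq:2.002}). By (\ref{eq:1.05a}), (\ref{eq:1.06a}) and the pre-$\lambda$-ring structure of $1+R[[t]]^+$, both sides of (\ref{eq:1.23}) are ``biadditive'' in $(x,y)$. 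Hence it suffices to verify (\ref{eq:1.23}) on a set of additive generators. For (\ref{eq:1.34}), I will use the standard reduction (as in \cite{ATall69}) that it suffices to check it on generators once (\ref{eq:1.23}) is known. Every element of $\widehat{K}^0(B)$ is a difference of classes $[\underline{E},\phi]=[\underline{E},0]+a(\phi)$, so the generators are the classes $[\underline{E},0]$ and $a(\phi)$.

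\emph{Step 1 (splitting).} Iterating Theorem \ref{prop:2.2} along the flag bundle $\sigma_{\mathcal F}:\mathcal{F}(E)\to B$ of $E$ gives an injective pre-$\lambda$-ring homomorphism $\sigma_{\mathcal F}^*$. Moreover, $\sigma_{\mathcal F}^*$ commutes with $\cup$, $\lambda^k$ and $a$, by (\ref{eq:2.084}) and (\ref{eq:2.043}). Therefore an identity among polynomial expressions in $\lambda^i$ of finitely many elements holds on $B$ if it holds after pull-back. For several bundles we pull back along the fibre product of their flag bundles. On $\mathcal{F}(E)$ one has $E\cong L_1\oplus\cdots\oplus L_r$. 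I equip each $L_i$ with the induced metric and a Hermitian connection, and write $\underline{L_1}\oplus\cdots\oplus\underline{L_r}$ for the resulting triple. By Definition \ref{defn:2.12}, the identity $\Phi$ gives
\begin{align*}
\sigma_{\mathcal F}^*[\underline{E},0]=\sum_i[\underline{L_i},0]+a(\theta),
\end{align*}
where $\theta$ is the Chern--Simons form $\widetilde{\ch}(\sigma_{\mathcal F}^*\underline{E},\underline{L_1}\oplus\cdots\oplus\underline{L_r})$, up to sign. So the generators reduce to line classes $\ell=[\underline{L},0]$ and classes $a(\phi)$.

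\emph{Step 2 (the case of generators).} For a line triple, $\Lambda^k\underline{L}=0$ for $k\ge2$, so (\ref{eq:2.10a}) gives $\lambda_t(\ell)=1+\ell t$. Also $\ell\cup\ell'$ is again a line class by (\ref{eq:2.084}). Hence (\ref{eq:1.23}) for two line classes and (\ref{eq:1.34}) for a line class are the usual formal identities, exactly as in topological K-theory. The remaining cases involve $a(\phi)$. Using (\ref{eq:2.040}) and (\ref{eq:2.10a}) we have
\begin{align*}
\lambda^k(a(\phi))=a\big([\lambda^k_\Gamma(0,\phi)]_{\mathrm{odd}}\big),\qquad \ell\cup a(\phi)=a\big([(\ch\underline{L},0)*(0,\phi)]_{\mathrm{odd}}\big),
\end{align*}
and $a(\phi)\cup a(\psi)=a(-d\phi\wedge\psi)$. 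Thus every expression occurring in (\ref{eq:1.23}) or (\ref{eq:1.34}) for these generators lies in the subring generated by the line classes and $a(\Omega^{\mathrm{odd}})$. On that subring, consider the assignment $\ell\mapsto(\ch\underline{L},0)$ and $a(\phi)\mapsto(0,\phi)$ into $\Gamma(B)$. It is compatible with products, and, composed with $[\cdot]_{\mathrm{odd}}$ on the $a$-part, it is compatible with $\lambda^k$. For the latter, the key check is $\lambda^k_\Gamma(\ch\underline{L},0)=0$ for $k\ge2$: since $\Psi^k_\Gamma(e^{c},0)=(e^{kc},0)$, (\ref{eq:2.004}) gives $\lambda_{t}(e^c,0)=1+e^ct$.

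Consequently both sides of each identity are $a$ applied to the odd parts of the corresponding expressions in $\Gamma(B)$, plus the purely line-bundle terms handled above. The identity in $\Gamma(B)$ holds by Lemma \ref{lem:2.1}.

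The main obstacle I expect is making this transport rigorous. The map to $\Gamma(B)$ is not defined on all of $\widehat{K}^0(B)$, because $\ch$ only sees forms. So I would rather argue directly: expand $P_n$ and $P_{m,n}$ evaluated on sums $\ell+a(\phi)$, and use that products of two $a$-terms collapse to $a(-d\phi\wedge\psi)$ to match term by term with the same expansion in $\Gamma(B)$. A second delicate point is checking that the Chern--Simons correction $a(\theta)$ in Step 1 causes no issues. Since $a(\theta)$ is just another generator of the second type, I expect it to be harmless.
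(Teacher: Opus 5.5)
Your argument is correct. It uses the same ingredients as the paper: injectivity of $\sigma^*$ (Theorem \ref{prop:2.2}), transport of the form-part identities from the $\lambda$-ring $\Gamma(B)$ (Lemma \ref{lem:2.1}), and biadditivity. The organization is different, however.

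\emph{The paper's route.} It splits off one line at a time, $\sigma^*E=H^*\oplus E_1$ on $\mathbb{P}(E)$, and proves the pure-bundle identities (Lemma \ref{lemma:3.2}) by induction on $\mathrm{rk}\,E$. The Chern--Simons correction is absorbed by Lemma \ref{lemma:3.1}. That lemma is stated for an arbitrary triple $\underline{E}$, so it relies on $\lambda^k_\Gamma(\ch(\underline{E}),0)=(\ch(\Lambda^k(\underline{E})),0)$, i.e.\ (\ref{eq:2.054}) from \cite{LM20}.

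\emph{Your route.} You split completely on an iterated flag bundle. You then reduce to the generators $\ell$ and $a(\phi)$, using biadditivity of (\ref{eq:1.23}) and closure of (\ref{eq:1.34}) under sums and differences. That closure holds once (\ref{eq:1.23}) is known on every closed manifold, together with $\lambda_t(1)=1+t$. The Chern--Simons term $a(\theta)$ is indeed harmless, since it is just another generator of the second type. The mixed identities are then needed only for line triples, where $\lambda_t(\ch(\underline{L}),0)=1+(\ch(\underline{L}),0)\,t$ is your direct computation.

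\emph{What each buys.} You trade the rank induction and the general case of (\ref{eq:2.054}) for the iterated splitting and the standard Atiyah--Tall closure argument. The closure argument also explicitly covers virtual classes, for (\ref{eq:1.34}), which the paper does not spell out.

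\emph{Your ``main obstacle''.} It disappears if you argue, as the paper does, with the fixed representatives $(\underline{L},0)$ and $(\underline{0},\phi)$, and only for pairs of generators. After your reductions, no expansion on sums $\ell+a(\phi)$ is needed.
\begin{itemize}
\item At the level of cycles, $(\underline{E},\phi)\mapsto(\ch(\underline{E}),\phi)$ is compatible with sums and products by (\ref{eq:2.040}).
\item On these generator cycles it is compatible with $\lambda^k$ by (\ref{eq:2.043}), (\ref{eq:2.8a}) and your line-bundle computation.
\item Every monomial of $P_n$ (weight $n\geq 1$ in the $\sigma_i$) and of $P_{n,m}$ contains a factor $\lambda^j(a(\cdot))$ with $j\geq1$, so its bundle part is $\underline{0}$.
\end{itemize}
Hence both sides are $a$ applied to the odd parts of the two sides of the corresponding identity in $\Gamma(B)$. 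The fact that your assignment is not well defined on classes therefore never enters.
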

	
	
	\section{Proof of Theorem \ref{thm:1.09}}\label{s03}
	
			
			Recall that $1+\widehat{K}^0(B)[[t]]^+$ is a pre-$\lambda$-ring with operations $(\widetilde{+},
			\widetilde{\times}, \tilde{\lambda}^n)$ in (\ref{eq:2.041})
			and (\ref{eq:1.022}).
			
			\begin{lemma}\label{lemma:3.1}
				For cycles $(\underline{E},\phi)$ and $(\underline{F},\psi)$, the following identities hold
				for cycles:
				\begin{align}
					\lambda_t\big((\underline{E}, 0)\cup (\underline{0},\psi)\big)
					&= \lambda_t(\underline{E}, 0) \mathbin{\widetilde{\times}} \lambda_t(\underline{0},\psi), \label{eq:2.23a} \\
					\lambda_t\big((\underline{0},\phi) \cup (\underline{F}, 0)\big)
					&= \lambda_t(\underline{0},\phi) \mathbin{\widetilde{\times}} \lambda_t(\underline{F}, 0), \label{eq:2.23b} \\
					\lambda_t\big((\underline{0}, \phi)\cup (\underline{0},\psi))
					&= \lambda_t(\underline{0}, \phi) \mathbin{\widetilde{\times}} \lambda_t(\underline{0},\psi), \label{eq:2.24a} \\
					\lambda_t\left(\lambda^m(\underline{0}, \phi)\right)
					&= \tilde{\lambda}^m\left(\lambda_t(\underline{0}, \phi)\right). \label{eq:2.24b}
				\end{align}
			\end{lemma}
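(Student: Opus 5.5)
The plan is to reduce all four identities to the corresponding identities in $\Gamma(B)$, which hold because $\Gamma(B)$ is a $\lambda$-ring by Lemma \ref{lem:2.1}. Everything happens at the level of cycles, with forms taken modulo $\Im\,d$. The basic observation concerns cycles of the form $(\underline{0},\phi)$: they add by adding forms, and by (\ref{eq:2.084}) and (\ref{eq:2.008}),
\begin{align*}
(\underline{0},\phi)\cup(\underline{0},\psi)=(\underline{0},-d\phi\wedge\psi)
&=\big(\underline{0},[(0,\phi)*(0,\psi)]_{\mathrm{odd}}\big),\\
(\underline{E},0)\cup(\underline{0},\psi)=(\underline{0},\ch(\underline{E})\wedge\psi)
&=\big(\underline{0},[(\ch(\underline{E}),0)*(0,\psi)]_{\mathrm{odd}}\big).
\end{align*}
In addition, $(\underline{E},0)\cup(\underline{F},0)=(\underline{E}\otimes\underline{F},0)$ with $\ch(\underline{E}\otimes\underline{F})=\ch(\underline{E})\wedge\ch(\underline{F})$. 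Also $\{(0,\phi)\}\subset\Gamma(B)$ is an ideal. Together these say the following. Take any integer polynomial without constant term, evaluate it on cycles of the types $(\underline{E},0)$ and $(\underline{0},\phi)$, and suppose every monomial contains at least one factor of the second type. Then the result is $(\underline{0},[\,\cdot\,]_{\mathrm{odd}})$ of the same polynomial evaluated in $\Gamma(B)$ on $(\ch(\underline{E}),0)$ and $(0,\phi)$. The even part of that $\Gamma(B)$-value vanishes automatically.

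Next I record the $\lambda$-operations on these special cycles. By (\ref{eq:2.10a}), $\lambda^k(\underline{E},0)=(\Lambda^k(\underline{E}),0)$, and by (\ref{eq:2.054}) and (\ref{eq:2.8a}) this corresponds to $\lambda^k_\Gamma(\ch(\underline{E}),0)=(\ch(\Lambda^k(\underline{E})),0)$. Likewise $\lambda^k(\underline{0},\phi)=(\underline{0},[\lambda^k_\Gamma(0,\phi)]_{\mathrm{odd}})$ for $k\ge1$, and by (\ref{eq:2.8a}) $\lambda_\Gamma^k(0,\phi)$ has zero even part, so it is exactly $(0,[\lambda^k_\Gamma(0,\phi)]_{\mathrm{odd}})$.

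To prove (\ref{eq:2.23a}), compare coefficients of $t^n$ for $n\geq1$. The right-hand side is $P_n(\lambda^1(\underline{E},0),\dots;\lambda^1(\underline{0},\psi),\dots)$. Since $P_n$ has weight $n\ge1$ in the second set of variables, every monomial contains a factor $\lambda^k(\underline{0},\psi)$. By the observation above, the coefficient therefore equals
\[
\big(\underline{0},[P_n(\lambda_\Gamma^\bullet(\ch(\underline{E}),0);\lambda_\Gamma^\bullet(0,\psi))]_{\mathrm{odd}}\big).
\]
Since $\Gamma(B)$ is a $\lambda$-ring, (\ref{eq:1.23}) identifies this with $(\underline{0},[\lambda^n_\Gamma((\ch(\underline{E}),0)*(0,\psi))]_{\mathrm{odd}})$. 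This is $\lambda^n(\underline{0},\ch(\underline{E})\wedge\psi)$, which is the left-hand side. The constant terms are $1$ on both sides, by (\ref{eq:1.05b}) and $\lambda^0=1$. Identity (\ref{eq:2.23b}) follows by the symmetric argument, or from (\ref{eq:2.23a}) using commutativity of the product and (\ref{eq:1.05a}). Identity (\ref{eq:2.24a}) is the same argument with both inputs of type $(\underline{0},\cdot)$, using $(0,\phi)*(0,\psi)=(0,-d\phi\wedge\psi)$.

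For (\ref{eq:2.24b}), set $\chi:=[\lambda^m_\Gamma(0,\phi)]_{\mathrm{odd}}$, so that $\lambda^m(\underline{0},\phi)=(\underline{0},\chi)$ and $(0,\chi)=\lambda^m_\Gamma(0,\phi)$ in $\Gamma(B)$ (here $m\ge1$; $m=0$ is trivial). The $t^n$-coefficient of the right-hand side is $P_{n,m}(\lambda^1(\underline{0},\phi),\dots,\lambda^{nm}(\underline{0},\phi))$. This polynomial has weight $nm\geq1$ and so no constant term, and all its inputs are of type $(\underline{0},\cdot)$. Hence it equals $(\underline{0},[P_{n,m}(\lambda_\Gamma^\bullet(0,\phi))]_{\mathrm{odd}})$, which by (\ref{eq:1.34}) in $\Gamma(B)$ is $(\underline{0},[\lambda^n_\Gamma\lambda^m_\Gamma(0,\phi)]_{\mathrm{odd}})=\lambda^n(\underline{0},\chi)$.

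The main point to check carefully is the transfer principle in the first paragraph. One must verify that cycle-level sums and cup products of these special cycles match the $\Gamma$-operations term by term, including the $-d\phi\wedge\psi$ correction, and that no monomial in $P_n$ or $P_{n,m}$ escapes into a pure bundle term. The latter is guaranteed by the weight statements on $P_n$ and $P_{n,m}$ recalled in Section \ref{s01}.
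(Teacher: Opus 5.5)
Your proposal is correct and follows essentially the paper's own argument. Like the paper, you transfer each identity to $\Gamma(B)$ via (\ref{eq:2.040}) and (\ref{eq:2.10a}), use the weight of $P_n$ (resp. $P_{n,m}$) to ensure that only $(\underline{0},\cdot)$-cycles occur, and then invoke (\ref{eq:1.23}) and (\ref{eq:1.34}) in the $\lambda$-ring $\Gamma(B)$; the only cosmetic difference is that for (\ref{eq:2.24b}) the paper first derives the general formula (\ref{eq:2.39}) for $P_{n,m}(\lambda^\bullet(\underline{E},\phi))$ and specializes to $\underline{E}=\underline{0}$, whereas you work with $(\underline{0},\cdot)$-inputs from the start.
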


			\begin{proof}

				By (\ref{eq:2.084}), (\ref{eq:2.008}) and (\ref{eq:2.043}),
				\begin{multline}\label{eq:1.037}
					\lambda^n\big((\underline{E}, 0)\cup (\underline{0},\psi )\big)
					\stackrel{(\ref{eq:2.084})}{=}\lambda^n\big(\underline{0}, 
					\ch(\underline{E})\psi \big)
					\stackrel{(\ref{eq:2.043})}{=}\Big(\underline{0}, \big[
					\lambda_{\Gamma}^n(0, \ch(\underline{E})\psi)\big]_{\mathrm{odd}}\Big)
					\\
					\stackrel{(\ref{eq:2.008})}{=}\Big(\underline{0}, \big[
					\lambda_{\Gamma}^n((\ch(\underline{E}),0)*(
					0, \psi))\big]_{\mathrm{odd}}\Big).
				\end{multline}
				From (\ref{eq:2.8a}) and (\ref{eq:2.054}), 
				for $k\in \N$,
				we have
				\begin{align}\label{eq:1.49}
					\lambda^k_{\Gamma}(\ch(\underline{E}), 0)
					=(\ch(\Lambda^k(\underline{E})), 0),\quad
					\lambda^k_{\Gamma}(0, \psi)=	\big(0,
					[\lambda^k_{\Gamma}(0, \psi)]_{\mathrm{odd}}\big).
				\end{align}
				Recall that $P_n$ is a polynomial with weight $n$ in $\{\sigma_i\}$.
				Since $\Gamma(B)$ is a $\lambda$-ring, by (\ref{eq:1.23}), 
				(\ref{eq:2.008}) and (\ref{eq:1.49}),
				for $n\geq 1$, we get
				\begin{multline}\label{eq:2.20a}
					\lambda_{\Gamma}^n\big((\ch(\underline{E}),0)*(0, \psi)\big)
					\stackrel{(\ref{eq:1.23})}{=}P_n\big(
					\lambda_{\Gamma}^1(\ch(\underline{E}), 0), \cdots, \lambda_{\Gamma}^n(
					\ch(\underline{E}), 0); \lambda_{\Gamma}^1(0, \psi),\cdots, 
					\lambda_{\Gamma}^n(0, \psi)\big)
					\\
					\stackrel{(\ref{eq:1.49})}{=}
					P_n\big(
					(\ch(\Lambda^1(\underline{E})), 0), \cdots, (
					\ch(\Lambda^n(\underline{E})), 0); \left(0, 
					\left[\lambda_{\Gamma}^1(0, \psi) \right]_{\mathrm{odd}}\right),\cdots, 
					\left(0, 
					\left[\lambda_{\Gamma}^n(0, \psi) \right]_{\mathrm{odd}}\right)\big)
					\\
					\stackrel{(\ref{eq:2.008})}{=}\left( 0, \left[P_n\big(
					(\ch(\Lambda^1(\underline{E})), 0), \cdots, (
					\ch(\Lambda^n(\underline{E})), 0);\right.\right.
					\\
					\left.\left. \left(0, 
					\left[\lambda_{\Gamma}^1(0, \psi) \right]_{\mathrm{odd}}\right),\cdots, 
					\left(0, 
					\left[\lambda_{\Gamma}^n(0, \psi) \right]_{\mathrm{odd}}\right)\big) 
					\right]_{\mathrm{odd}}\right).
				\end{multline}
				By (\ref{eq:2.040}), (\ref{eq:2.10a})
				and (\ref{eq:2.20a}), for $n\geq 1$,
				\begin{align}\label{eq:2.21a}
					\begin{split}
						&P_n\big(\lambda^1(
						\underline{E}, 0), \cdots, \lambda^n(\underline{E}, 0); \lambda^1(
						\underline{0}, \psi), \cdots, \lambda^n(\underline{0}, \psi) \big)
						\\
						\stackrel{(\ref{eq:2.10a})}{=}&P_n\left((
						\Lambda^1(\underline{E}), 0), \cdots, 
						(\Lambda^n(\underline{E}), 0);  \left(\underline{0}, 
						\left[\lambda_{\Gamma}^1(0, \psi) \right]_{\mathrm{odd}}\right), 
						\cdots,  \left(\underline{0}, 
						\left[\lambda_{\Gamma}^n(0, \psi) 
						\right]_{\mathrm{odd}}\right) \right)
						\\
						\stackrel{(\ref{eq:2.040})}{=}&\left( \underline{0}, \left[P_n\big(
						(\ch(\Lambda^1(\underline{E})), 0), \cdots, (
						\ch(\Lambda^n(\underline{E})), 0); \right.\right.
						\\
						&\quad\quad\quad\left.\left.\left(0, 
						\left[\lambda_{\Gamma}^1(0, \psi) \right]_{\mathrm{odd}}\right),\cdots, 
						\left(0, 
						\left[\lambda_{\Gamma}^n(0, \psi) 
						\right]_{\mathrm{odd}}\right)\big) \right]_{\mathrm{odd}}\right)
						\\
						\stackrel{(\ref{eq:2.20a})}{=}&\left( \underline{0}, 
						\left[\lambda_{\Gamma}^n\big((\ch(\underline{E}),0)*(0, 
						\psi)\big) \right]_{\mathrm{odd}}\right).
					\end{split}
				\end{align}
				By (\ref{eq:1.037})  and (\ref{eq:2.21a}),  we get
				\begin{align}\label{eq:1.54}
					\lambda^n\big((\underline{E}, 0)\cup (\underline{0},\psi )\big)
					=P_n\big(\lambda^1(\underline{E}, 0), \cdots, \lambda^n(
					\underline{E}, 0); \lambda^1(\underline{0}, \psi),
					\cdots, \lambda^n(\underline{0}, \psi) \big).
				\end{align}
				Similarly, we have
				\begin{align}\label{eq:1.041}
					\lambda^n\big((\underline{0}, \phi)\cup (\underline{0},\psi ))
					=P_n\big(\lambda^1(\underline{0}, \phi), \cdots, \lambda^n(
					\underline{0}, \phi); \lambda^1(\underline{0}, \psi),
					\cdots, \lambda^n(\underline{0}, \psi) \big).
				\end{align}
				Thus from (\ref{eq:2.001}), (\ref{eq:2.041}), (\ref{eq:1.54}) 
				and (\ref{eq:1.041}), we obtain (\ref{eq:2.23a})
				and (\ref{eq:2.24a}).
				Since the operations ``$\cup$" and ``$\widetilde{\times}$"
				are commutative, from (\ref{eq:2.23a}),
				we get (\ref{eq:2.23b}).

				On the other hand,
				by (\ref{eq:2.040})-(\ref{eq:2.043}), we have
				\begin{align}\label{eq:2.79}
					\lambda^i\big(\underline{E}, \phi\big)\cup \lambda^j
					\big(\underline{F}, \psi\big)
					=\Big(\Lambda^i(\underline{E})\otimes \Lambda^j(\underline{F}), 
					\big[\lambda^i_{\Gamma}(\ch(\underline{E}), \phi)*
					\lambda^j_{\Gamma}(\ch(\underline{F}), \psi)\big]_{\mathrm{odd}}\Big).
				\end{align}
				By (\ref{eq:2.043}),
				\begin{align}\label{eq:2.80}
					\lambda^i\big(\underline{E}, \phi\big)+ \lambda^j
					\big(\underline{F}, \psi\big)
					=\Big(\Lambda^i(\underline{E})\oplus \Lambda^j(\underline{F}), 
					\big[\lambda^i_{\Gamma}(\ch(\underline{E}), \phi)+
					\lambda^j_{\Gamma}(\ch(\underline{F}), \psi)\big]_{\mathrm{odd}}\Big).
				\end{align}
				Since $P_{n,m}$ is a polynomial,
				from (\ref{eq:2.79}) and (\ref{eq:2.80}), we have
				\begin{multline}\label{eq:2.39}
					P_{n,m}\big(\lambda^1(\underline{E},\phi),\cdots,
					\lambda^{mn}(\underline{E},\phi)\big)
					\\
					=\Big(P_{n,m}\big(\Lambda^1(\underline{E}),\cdots,
					\Lambda^{mn}(\underline{E})\big), \Big[
					P_{n,m}\big(\lambda_{\Gamma}^1(\ch(\underline{E}),\phi),\cdots,
					\lambda_{\Gamma}^{mn}(\ch(\underline{E}),\phi)\big)
					\Big]_{\mathrm{odd}} \Big).
				\end{multline} 
				Recall that $\Gamma(B)$ is a 
				$\lambda$-ring. By (\ref{eq:1.34}) and (\ref{eq:2.39}), we see that
				\begin{multline}\label{eq:1.044}
					P_{n,m}\big(\lambda^1(\underline{E},\phi),\cdots,
					\lambda^{mn}(\underline{E},\phi)\big)
					\\
					=\Big(P_{n,m}\big(\Lambda^1(\underline{E}),\cdots,
					\Lambda^{mn}(\underline{E})\big),
					\big[\lambda_{\Gamma}^n(\lambda_{\Gamma}^m(\ch(\underline{E}),
					\phi))\big]_{\mathrm{odd}} \Big).
				\end{multline}
				By (\ref{eq:2.001}),
				(\ref{eq:1.022}),  (\ref{eq:2.054}), (\ref{eq:2.043}) and (\ref{eq:1.044}), 
				\begin{multline}\label{eq:1.62}
					\lambda_t\big(\lambda^m(\underline{0},\phi)\big)
					\stackrel{(\ref{eq:2.043})}{=}\lambda_t\Big(
					\underline{0}, \big[\lambda_{\Gamma}^m(0, \phi)\big]_{\mathrm{odd}}\Big)
					\\
					\xlongequal{(\ref{eq:2.001})(\ref{eq:2.043})}1+\sum_{n\geq 1}
					t^n \Big(\underline{0}
					, \Big[\lambda_{\Gamma}^n\big(0, 
					\big[\lambda_{\Gamma}^m(0, \phi)\big]_{\mathrm{odd}} \big) \Big]_{\mathrm{odd}} \Big)
					\stackrel{(\ref{eq:2.054})}{=}1+\sum_{n\geq 1}t^n \Big(\underline{0}
					, \Big[\lambda_{\Gamma}^n\big(\lambda_{\Gamma}^m\big(
					0,\phi\,\big)
					\big)\Big]_{\mathrm{odd}}\Big)
					\\
					\stackrel{(\ref{eq:1.044})}{=}1+\sum_{n\geq 1}t^n 
					P_{n,m}\big(\lambda^1(\underline{0},\phi),\cdots,
					\lambda^{mn}(\underline{0},\phi)\big)\stackrel{(\ref{eq:1.022})}{=}
					\tilde{\lambda}^m
					\big(\lambda_t(\underline{0},\phi)\big).
				\end{multline}
				We obtain (\ref{eq:2.24b}).	
				The proof of Lemma \ref{lemma:3.1} is complete.
			\end{proof}
			
			By applying the splitting principle Theorem \ref{prop:2.2}
			in differential K-theory, we establish the following 
			key part for Theorem \ref{thm:1.09}.
			
			\begin{lemma}\label{lemma:3.2}
				For cycles $(\underline{E},0)$ and $(\underline{F}, 0)$, we have
				\begin{align}\label{eq:2.056}
					\lambda_t\left([\underline{E},0]\cup [\underline{F},0]\right)
					=[\lambda_t(\underline{E},0)]\widetilde{\times}[\lambda_t
					(\underline{F},0)]\in \widehat{K}^0(B),
				\end{align}
				and
				\begin{align}\label{eq:2.057}
					\lambda_t([\lambda^m(\underline{E},0 )])=\widetilde{\lambda}^m
					\left[\lambda_t(\underline{E},0)\right]\in \widehat{K}^0(B).
				\end{align}
			\end{lemma}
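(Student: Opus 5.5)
Our plan is to reduce both identities to the case of line bundles by the splitting principle (Theorem \ref{prop:2.2}), and then settle the line bundle case by a direct computation.

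\textbf{Reduction to sums of line bundles.} Iterating the projective bundle construction (first for $E$, then for the pull-back of $F$, each time quotienting off the tautological line), we obtain a smooth map $\sigma:\widetilde{B}\to B$ of closed manifolds. It is a composition of projective bundle projections, so by Theorem \ref{prop:2.2} the induced map $\sigma^*:\widehat{K}^0(B)\to\widehat{K}^0(\widetilde{B})$ is injective, and it is a ring map commuting with the $\lambda^k$. Moreover, $\sigma^*E\simeq L_1\oplus\cdots\oplus L_r$ and $\sigma^*F\simeq M_1\oplus\cdots\oplus M_s$ topologically. Choose metrics and connections on the $L_i$ and $M_j$, and write $\underline{L}=\oplus\underline{L_i}$. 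By the equivalence relation of Definition \ref{defn:2.12}, we then have
\[
[\sigma^*\underline{E},0]=[\underline{L},0]+a(\theta)=\sum_i[\underline{L_i},0]+a(\theta)
\]
for some odd form class $\theta$, namely a Chern--Simons form $\widetilde{\ch}$. The same holds for $F$, with a form $\theta'$.

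\textbf{Expanding with the pre-$\lambda$-ring rules.} Because the pre-$\lambda$-ring axioms already give $\lambda_t(x+y)=\lambda_t(x)\widetilde{+}\lambda_t(y)$, and because $\widetilde{\times}$ distributes over $\widetilde{+}$ by (\ref{eq:1.05a}), both sides of (\ref{eq:2.056}) expand bilinearly. The mixed terms involving $a(\theta)$ or $a(\theta')$ are handled by Lemma \ref{lemma:3.1}, identities (\ref{eq:2.23a})--(\ref{eq:2.24a}). This reduces (\ref{eq:2.056}) to the case where $E$ and $F$ are line bundles. In that case $\lambda_t(\underline{L},0)=1+[\underline{L},0]t$, since $\Lambda^k(\underline{L})=0$ for $k\ge2$ and the odd part is $0$ by (\ref{eq:2.10a}). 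Likewise $\lambda_t(\underline{L}\otimes\underline{M},0)=1+[\underline{L}\otimes\underline{M},0]t$. This equals $(1+[L]t)\widetilde{\times}(1+[M]t)$ because $P_1(s_1;\sigma_1)=s_1\sigma_1$ and $P_n=0$ for $n\ge2$ when only $s_1$ and $\sigma_1$ are nonzero.

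\textbf{The second identity and the main obstacle.} For (\ref{eq:2.057}) we argue the same way. First, $\lambda^m$ of a sum is expanded via the pre-$\lambda$ axioms into products $\lambda^{j}(x)\lambda^{m-j}(y)$. Here we need multiplicativity of $\lambda_t$ on arbitrary elements, which we take from (\ref{eq:2.056}) combined with Lemma \ref{lemma:3.1}: every element is $[\underline{E},0]+a(\phi)$, so full multiplicativity holds. With that, the standard formal argument of \cite{ATall69} shows that the set of $x$ satisfying $\lambda_t\lambda^m(x)=\tilde\lambda^m\lambda_t(x)$ for all $m$ is closed under addition. The summands $a(\phi)$ are covered by (\ref{eq:2.24b}). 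For a line bundle $L$, $\lambda^m(L)=0$ for $m\ge2$, while $\tilde\lambda^m(1+Lt)=1$ (the ``zero''), because the $P_{n,m}$ vanish when only $s_1$ is nonzero. For $m=0,1$ the identity is trivial by (\ref{eq:1.06a}) and (\ref{eq:1.7a}), using $\lambda_t(1)=1+t$.

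The main obstacle is the closure-under-addition step in the proof of (\ref{eq:2.057}). It requires full multiplicativity of $\lambda_t$ on all of $\widehat{K}^0(\widetilde{B})$, including the interaction of the $a(\phi)$ terms with bundle terms, so the order of the argument must be arranged carefully. A second point to check is that the topological splitting isomorphism only changes classes by $a(\widetilde{\ch})$ terms, and that $\sigma^*$ commutes with $\lambda^k$ on such terms; this follows from naturality of (\ref{eq:2.043}).
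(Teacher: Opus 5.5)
\textbf{Verdict: correct, but by a different route than the paper.} Your argument works; the differences from the paper are in how the reduction is organized.

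\textbf{How the paper argues.} It proves both identities by induction on $\mathrm{rk}\,E$, keeping $F$ arbitrary.
\begin{itemize}
\item At each step it pulls back to a single projective bundle and writes $[\sigma^*\underline{E},0]=[\underline{H^*}\oplus\underline{E_1},\widetilde{\ch}(\cdots)]$. It then applies the inductive hypothesis to the rank-one piece $H^*$ and the rank-$k$ piece $E_1$.
\item Its base case therefore needs $E$ of rank one but $F$ arbitrary. This is handled by the identification of geometric triples $\Lambda^m(\underline{E}\otimes\underline{F})=\underline{E^m}\otimes\Lambda^m(\underline{F})$ in (\ref{eq:3.21}).
\item For (\ref{eq:2.057}) it does the additivity bookkeeping by hand in (\ref{eq:3.29})--(\ref{eq:3.31}). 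This is done only for the specific decomposition $H^*\oplus E_1$ plus a Chern--Simons term.
\end{itemize}

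\textbf{How your route differs.}
\begin{itemize}
\item You split both $E$ and $F$ completely over a tower of projective bundles. Injectivity follows by iterating Theorem \ref{prop:2.2}.
\item Your base case is therefore the trivial line-$\otimes$-line computation, so you never need the geometric identity for $\Lambda^m(\underline{E}\otimes\underline{F})$.
\item For (\ref{eq:2.057}) you replace the explicit manipulation with the general Atiyah--Tall observation: once $\lambda_t$ is multiplicative, the set of $x$ with $\lambda_t(\lambda^m x)=\tilde\lambda^m(\lambda_t x)$ for all $m$ is closed under addition.
\end{itemize}
This is more modular. The paper's route, in exchange, needs only one projective bundle per step. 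It also never uses multiplicativity beyond the special forms (\ref{eq:2.056}) and (\ref{eq:2.23a}).

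Your argument needs no more than that either. Apply the closure step to $\sum_i[\underline{L_i},0]+a(\theta)$ and add $a(\theta)$ last. Then every product $\lambda^i(x)\lambda^{m-i}(y)$ that appears is either a bundle class times a bundle class, or a bundle class times an element $a(\cdot)$. So the ordering issue you flag as the main obstacle is not actually delicate.

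\textbf{Two small slips.}
\begin{itemize}
\item It is not true that every element of $\widehat{K}^0$ has the form $[\underline{E},0]+a(\phi)$. The Grothendieck group contains virtual classes of negative rank. The conclusion you draw still holds: both $\lambda_t(xy)$ and $\lambda_t(x)\widetilde{\times}\lambda_t(y)$ are biadditive maps into $(1+\widehat{K}^0[[t]]^+,\widetilde{+})$. So agreement on the generators $[\underline{E},0]$ and $a(\phi)$ gives agreement everywhere.
\item You should not cite (\ref{eq:1.7a}) for $\lambda_t(1)=1+t$, because that formula was derived assuming a $\lambda$-ring structure, which is what is being proved. Instead, note that $1=[\underline{\C},0]$ is itself a line-bundle class, so $\lambda^k(1)=0$ for $k\geq 2$ directly.
\end{itemize}
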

			\begin{proof}
				We will prove this lemma by induction on the rank of the vector bundle $E$, which we denote by $\mathrm{rk} E$.
				
				If $\mathrm{rk} E=1$, by (\ref{eq:1.01}) and (\ref{eq:2.10a}), we have
				\begin{align}\label{eq:3.19}
					\lambda^0([\underline{E}, 0])=1,\quad 
					\lambda^1([\underline{E}, 0])=[\underline{E}, 0],\quad
					\lambda^m([\underline{E}, 0])=0\ \text{for}\ m>1.
				\end{align}
				From the definition of $P_n$, we see that
				\begin{align}\label{eq:3.20}
					P_n(s_1, 0\cdots, 0; \sigma_1,\cdots,\sigma_n)=s_1^n\sigma_n.
				\end{align}
				Thus from (\ref{eq:2.084}), (\ref{eq:2.10a}), (\ref{eq:3.19}) and (\ref{eq:3.20}), 
				\begin{multline}\label{eq:3.21}
					\lambda^m\left((\underline{E},0)\cup (\underline{F},0)\right)
					\stackrel{(\ref{eq:2.084})}{=}\lambda^m(\underline{E}\otimes \underline{F}, 0)
					\stackrel{(\ref{eq:2.10a})}{=}(\Lambda^m(\underline{E}\otimes \underline{F}), 0)
					=(\underline{E^m}\otimes \Lambda^m(\underline{F}), 0)
					\\
					\stackrel{(\ref{eq:2.084})}{=}
					(\underline{E},0)^m\cup (\Lambda^m(\underline{F}), 0)
					\xlongequal{(\ref{eq:3.19})(\ref{eq:3.20})}P_m\left((\underline{E},0), 0,\cdots, 0; (\Lambda^1(\underline{F}), 0), \cdots, (\Lambda^m(\underline{F}), 0) \right).
				\end{multline}
				Then (\ref{eq:2.056}) follows from (\ref{eq:2.001}),
				(\ref{eq:2.041}), (\ref{eq:3.19}) and (\ref{eq:3.21})
				for $\mathrm{rk} E=1$.
				
				We assume that (\ref{eq:2.056}) holds for any closed manifold
				$B$ with $\mathrm{rk} E\leq k$. 
				
				For a geometric triple $\underline{E}$ on $B$ with $\mathrm{rk} E=k+1$, we consider the projective bundle $
				\sigma: \mathbb{P}(E)\to B$
				of $E$. Let $\sigma^*\underline{E}$ be the pull-back
				of the geometric triple $\underline{E}$ on $\mathbb{P}(E)$.
				Then there exists a complex vector bundle $E_1$ on $\mathbb{P}(E)$
				such that 
				\begin{align}\label{eq:3.22}
					\sigma^*E=H^*\oplus E_1,
				\end{align}
				where $H^*$ is the tautological line bundle
				and the dual bundle of $H$ in the proof
				of Theorem \ref{prop:2.2}. Let $\underline{H^*}$ (resp.
				$\underline{E_1}$) be any geometric triple on $\mathbb{P}(E)$ with 
				underlying vector bundle $H^*$ (resp. $E_1$). From 
				Definition \ref{defn:2.12}, we have
				\begin{align}\label{eq:3.23}
					[\sigma^*\underline{E}, 0]=\left[\underline{H^*}\oplus \underline{E_1}, \widetilde{\ch}(\sigma^*\underline{E},
					\underline{H^*}\oplus \underline{E_1})\right]\in 
					\widehat{K}^0(\mathbb{P}(E)).
				\end{align}
				Now on $\mathbb{P}(E)$,
				\begin{multline}\label{eq:3.24}
					[\sigma^*\underline{E}, 0]\cup [\sigma^*\underline{F}, 0]
					=[\underline{H^*}, 0]\cup [\sigma^*\underline{F}, 0]
					+\left[\underline{E_1}, 0\right]\cup [\sigma^*\underline{F}, 0]
					\\	
					+\left[\underline{0},\widetilde{\ch}(\sigma^*\underline{E},
					\underline{H^*}\oplus \underline{E_1}) \right]\cup [\sigma^*\underline{F}, 0].
				\end{multline}
				From the inductive assumption, since $\mathrm{rk} H^*=1$ and
				$\mathrm{rk} E_1=k$, we have
				\begin{align}\label{eq:3.25a}
					\begin{split}
						\lambda_t\left([\underline{H^*}, 0]\cup [\sigma^*\underline{F}, 0]\right)&=\lambda_t([\underline{H^*}, 0])\widetilde{\times}
						\lambda_t([\sigma^*\underline{F}, 0]),
						\\
						\lambda_t\left(\left[\underline{E_1}, 0\right]\cup [\sigma^*\underline{F}, 0]\right)&=\lambda_t\left(\left[\underline{E_1}, 0\right]\right)\widetilde{\times}
						\lambda_t([\sigma^*\underline{F}, 0]).
					\end{split}
				\end{align}
				
				From (\ref{eq:2.002}), (\ref{eq:2.041}), (\ref{eq:1.05a}), (\ref{eq:2.08a}),
				(\ref{eq:2.23b}), (\ref{eq:3.23}), (\ref{eq:3.24})
				and (\ref{eq:3.25a}), we have
				\begin{multline}\label{eq:3.25}
					\sigma^*\lambda_t\left([\underline{E}, 0]\cup [\underline{F}, 0]\right)
					\stackrel{(\ref{eq:2.08a})}{=}\lambda_t\left([\sigma^*\underline{E}, 0]\cup [\sigma^*\underline{F}, 0]\right)
					\\
					\xlongequal{(\ref{eq:2.002})(\ref{eq:3.24})}\lambda_t\left([\underline{H^*}, 0]\cup [\sigma^*\underline{F}, 0]\right)\cdot \lambda_t\left(\left[\underline{E_1}, 0\right]\cup [\sigma^*\underline{F}, 0]\right)\cdot \lambda_t\left(
					\left[\underline{0},\widetilde{\ch}(\sigma^*\underline{E},
					\underline{H^*}\oplus \underline{E_1}) \right]\cup [\sigma^*\underline{F}, 0]\right)
					\\
					\xlongequal{(\ref{eq:2.041})(\ref{eq:2.23b})(\ref{eq:3.25a})}\lambda_t([\underline{H^*}, 0])\widetilde{\times}
					\lambda_t([\sigma^*\underline{F}, 0])\widetilde{+}
					\lambda_t\left(\left[\underline{E_1}, 0\right]\right)\widetilde{\times} \lambda_t([\sigma^*\underline{F}, 0])
					\\
					\widetilde{+}
					\lambda_t\left(\left[\underline{0},\widetilde{\ch}(\sigma^*\underline{E},
					\underline{H^*}\oplus \underline{E_1}) \right]\right)\widetilde{\times} \lambda_t([\sigma^*\underline{F}, 0])
					\\
					\stackrel{(\ref{eq:1.05a})}{=}\left(\lambda_t([\underline{H^*}, 0])\widetilde{+}
					\lambda_t\left(\left[\underline{E_1}, 0\right]\right)\widetilde{+} 
					\lambda_t\left(\left[\underline{0},\widetilde{\ch}(\sigma^*\underline{E},
					\underline{H^*}\oplus \underline{E_1}) \right]\right)\right)
					\widetilde{\times} \lambda_t([\sigma^*\underline{F}, 0])
					\\
					\xlongequal{(\ref{eq:2.002})(\ref{eq:2.041})}\lambda_t
					\left(\left[\underline{H^*}\oplus \underline{E_1}, \widetilde{\ch}(\sigma^*\underline{E},
					\underline{H^*}\oplus \underline{E_1})\right]\right)
					\widetilde{\times} \lambda_t([\sigma^*\underline{F}, 0])
					\\
					\stackrel{(\ref{eq:3.23})}{=}\lambda_t([\sigma^*\underline{E}, 0])
					\widetilde{\times} \lambda_t([\sigma^*\underline{F}, 0])
					=\sigma^*\left(\lambda_t([\underline{E}, 0])
					\widetilde{\times} \lambda_t([\underline{F}, 0])  \right)
					\in \widehat{K}^0(\mathbb{P}(E)).
				\end{multline}
				From Theorem \ref{prop:2.2}, $\sigma^*:\widehat{K}^0(B)
				\to \widehat{K}^0(\mathbb{P}(E))$ is injective. So (\ref{eq:2.056})
				holds for $\mathrm{rk} E=k+1$. The inductive process for 
				(\ref{eq:2.056}) is complete.
				
				The proof of (\ref{eq:2.057}) is similar.
				If $\mathrm{rk} E=1$, by (\ref{eq:2.001}), (\ref{eq:1.06a}) and (\ref{eq:3.19}), we have
				\begin{align}\label{eq:3.26}
					\lambda_t([\lambda^0(\underline{E}, 0)])\stackrel{(\ref{eq:3.19})}{=}\lambda_t(1)
					\stackrel{(\ref{eq:3.19})}{=}1+t \stackrel{(\ref{eq:1.06a})}{=}\tilde{\lambda}^0(1+t [\underline{E}, 0])
					\stackrel{(\ref{eq:3.19})}{=}\tilde{\lambda}^0(\lambda_t([\underline{E}, 0]))
				\end{align}
				and
				\begin{multline}\label{eq:3.27}
					\lambda_t([\lambda^1(\underline{E}, 0)])
					\stackrel{(\ref{eq:3.19})}{=}\lambda_t([\underline{E}, 0])
					\xlongequal{(\ref{eq:2.001})(\ref{eq:3.19})}1+t [\underline{E}, 0]
					\stackrel{(\ref{eq:1.06a})}{=}\tilde{\lambda}^1(1+t [\underline{E}, 0])
					\\
					\xlongequal{(\ref{eq:2.001})(\ref{eq:3.19})}\tilde{\lambda}^1(\lambda_t([\underline{E}, 0])).
				\end{multline}
				From the definition of $P_{n,m}$, we see that for $m>1$, $n\geq 1$,
				\begin{align}\label{eq:3.27a}
					P_{n,m}(s_1, 0,\cdots, 0)=0.
				\end{align}
				For $m>1$, from (\ref{eq:1.05b}), (\ref{eq:1.022}), (\ref{eq:3.19})
				and (\ref{eq:3.27a}),
				\begin{multline}\label{eq:3.28a}
					\tilde{\lambda}^m\left(\lambda_t([\underline{E}, 0])\right)
					\stackrel{(\ref{eq:3.19})}{=} \tilde{\lambda}^m\left(1+t[\underline{E}, 0]\right)
					\stackrel{(\ref{eq:1.022})}{=}
					1+\sum_{n\geq 1}P_{n,m}([\underline{E}, 0], 0,\cdots, 0) t^n
					\stackrel{(\ref{eq:3.27a})}{=} 1
					\\
					\stackrel{(\ref{eq:1.05b})}{=} \lambda_t(0)
					\stackrel{(\ref{eq:3.19})}{=} \lambda_t\left( 
					\left[\lambda^m([\underline{E}, 0])\right]\right).
				\end{multline}
				Thus (\ref{eq:2.057}) holds for $\mathrm{rk} E=1$.
				
				We assume that (\ref{eq:2.057}) holds for any closed manifold
				$B$ with $\mathrm{rk} E\leq k$. 
				For a geometric triple $\underline{E}$ on $B$ with $\mathrm{rk} E=k+1$, from (\ref{eq:1.01}) and (\ref{eq:3.23}),
				\begin{multline}\label{eq:3.28}
					\lambda^m([\sigma^*\underline{E}, 0])
					=\lambda^m\left(\left[\underline{H^*}\oplus \underline{E_1}, \widetilde{\ch}(\sigma^*\underline{E},
					\underline{H^*}\oplus \underline{E_1})\right] \right)
					\\
					=\sum_{i=0}^m \lambda^i\left(\left[\underline{H^*}\oplus \underline{E_1},0\right]\right)\cup \lambda^{m-i}\left(\left[\underline{0}, \widetilde{\ch}(\sigma^*\underline{E},
					\underline{H^*}\oplus \underline{E_1})  \right]\right).
				\end{multline}
				From (\ref{eq:2.002}), (\ref{eq:2.041}), 
				(\ref{eq:2.08a}),
				(\ref{eq:2.10a}), (\ref{eq:2.23a}), (\ref{eq:2.24b}) and (\ref{eq:3.28}),
				we get
				\begin{multline}\label{eq:3.29}
				\sigma^*\lambda_t(\lambda^m([\underline{E}, 0]))
				\stackrel{(\ref{eq:2.08a})}{=}	
					\lambda_t(\lambda^m([\sigma^*\underline{E}, 0]))
					\\
					\xlongequal{(\ref{eq:2.002})(\ref{eq:2.041})
						(\ref{eq:2.10a})(\ref{eq:2.23a})(\ref{eq:3.28})}
					\widetilde{+}_{i=0}^m\left(\lambda_t\left(\lambda^i\left(\left[\underline{H^*}\oplus \underline{E_1},0\right]\right)\right)\widetilde{\times}
					\lambda_t\left(\lambda^{m-i}\left(\left[\underline{0}, \widetilde{\ch}(\sigma^*\underline{E},
					\underline{H^*}\oplus \underline{E_1})  \right]\right)\right)
					\right)
					\\
					\stackrel{(\ref{eq:2.24b})}{=}\widetilde{+}_{i=0}^m
					\left(\lambda_t\left(\lambda^i\left(\left[\underline{H^*}\oplus \underline{E_1},0\right]\right)\right)\widetilde{\times}
					\tilde{\lambda}^{m-i}\left(\lambda_t\left(\left[\underline{0}, \widetilde{\ch}(\sigma^*\underline{E},
					\underline{H^*}\oplus \underline{E_1})  \right]\right)\right)\right).
				\end{multline}
				
				For $i=0$, from (\ref{eq:1.01}), (\ref{eq:1.06a}) and
				(\ref{eq:3.19}), we have
				\begin{align}
					\lambda_t\left( \lambda^0\left( \left[ \underline{H^*}\oplus \underline{E_1}, 0\right]\right)\right)
					\stackrel{(\ref{eq:1.01})}{=} \lambda_t(1)\stackrel{(\ref{eq:3.19})}{=} 1+t
					\stackrel{(\ref{eq:1.06a})}{=}
					\tilde{\lambda}^0\left( \lambda_t\left( \left[ \underline{H^*}\oplus \underline{E_1}, 0\right]\right)\right).
				\end{align}
				Since $\mathrm{rk} E_1=k$, from the inductive assumption,
				we have
				\begin{align}\label{eq:3.30a}
					\lambda_t\left( \lambda^i\left( \left[  \underline{E_1}, 0\right]\right)\right)=\tilde{\lambda}^i\left( \lambda_t\left( \left[ \underline{E_1}, 0\right]\right)\right).
				\end{align}
				Since $H^*$ is a line bundle, from (\ref{eq:1.01}), 
				(\ref{eq:2.002}), (\ref{eq:2.041}), (\ref{eq:1.06a}), 
				(\ref{eq:2.10a}),
				(\ref{eq:2.056}),
				(\ref{eq:3.19}), (\ref{eq:3.28a}) and (\ref{eq:3.30a}), for $i\geq 1$,
				\begin{multline}\label{eq:3.30}
					\lambda_t\left(\lambda^i\left(\left[\underline{H^*}\oplus \underline{E_1},0\right]\right)\right)
					\xlongequal{(\ref{eq:1.01})(\ref{eq:3.19})}
					\lambda_t\left(\lambda^i\left(\left[\underline{E_1}, 0\right]\right)+[\underline{H^*}, 0]\cup \lambda^{i-1}\left(\left[\underline{E_1}, 0\right]\right)\right)
					\\
					\stackrel{(\ref{eq:2.002})}{=}
					\lambda_t\left(\lambda^i\left(\left[\underline{E_1}, 0\right]\right)\right)\cdot
					\lambda_t\left([\underline{H^*}, 0]\cup \lambda^{i-1}\left(\left[\underline{E_1}, 0\right]\right)
					\right)
					\\
					\xlongequal{(\ref{eq:2.041})(\ref{eq:2.10a})(\ref{eq:2.056})}\lambda_t\left(\lambda^i\left(\left[\underline{E_1}, 0\right]\right)\right)
					\widetilde{+}\lambda_t([\underline{H^*}, 0])\widetilde{\times} \lambda_t\left(\lambda^{i-1}\left(\left[\underline{E_1}, 0
					\right]\right)\right)
					\\
					\xlongequal{(\ref{eq:1.06a})(\ref{eq:3.30a})}
					\tilde{\lambda}^i\left(\lambda_t\left(\left[\underline{E_1}, 0\right]\right)\right)
					\widetilde{+}\tilde{\lambda}^1\left(\lambda_t([\underline{H^*}, 0])\right)\widetilde{\times}\tilde{\lambda}^{i-1}\left(\lambda_t\left(
					\left[\underline{E_1}, 0\right]\right)\right)
					\\
					\xlongequal{(\ref{eq:1.06a})(\ref{eq:3.28a})}\tilde{\lambda}^i\left(\lambda_t([\underline{H^*}, 0])
					\widetilde{+}\lambda_t\left(\left[\underline{E_1}, 0\right]\right)\right)
					\xlongequal{(\ref{eq:2.002})(\ref{eq:2.041})}\tilde{\lambda}^i\left(\lambda_t\left(\left[\underline{H^*}\oplus \underline{E_1},0\right]\right)\right).
				\end{multline}
				So from (\ref{eq:2.002}), (\ref{eq:2.041}), (\ref{eq:1.06a}), (\ref{eq:2.08a}),
				(\ref{eq:3.23}), (\ref{eq:3.29}) and (\ref{eq:3.30}), we get
				\begin{multline}\label{eq:3.31}
					\sigma^*\left(\lambda_t\left(\lambda^m([\underline{E}, 0])\right)\right)
					\\
					\xlongequal{(\ref{eq:3.29})(\ref{eq:3.30})}
					\widetilde{+}_{i=0}^m\left(\tilde{\lambda}^i\left(\lambda_t
					\left(\left[\underline{H^*}\oplus \underline{E_1},0\right]\right)\right)\widetilde{\times}
					\tilde{\lambda}^{m-i}\left(\lambda_t\left(\left[\underline{0}, \widetilde{\ch}(\sigma^*\underline{E},
					\underline{H^*}\oplus \underline{E_1})  \right]\right)\right)
					\right)
					\\
					\stackrel{(\ref{eq:1.06a})}{=}\tilde{\lambda}^m\left(\lambda_t\left(\left[\underline{H^*}\oplus \underline{E_1},0\right]\right)\widetilde{+}
					\lambda_t\left(\left[\underline{0}, \widetilde{\ch}(\sigma^*\underline{E},
					\underline{H^*}\oplus \underline{E_1})  \right]\right)\right)
					\\
					\xlongequal{(\ref{eq:2.002})(\ref{eq:2.041})}\tilde{\lambda}^m\left(\lambda_t\left(\left[\underline{H^*}\oplus \underline{E_1}, \widetilde{\ch}(\sigma^*\underline{E},
					\underline{H^*}\oplus \underline{E_1})\right] \right)\right)
					\\
					\stackrel{(\ref{eq:3.23})}{=}\tilde{\lambda}^m(\lambda_t([\sigma^*\underline{E}, 0] ))
					=\tilde{\lambda}^m(\sigma^*\lambda_t([\underline{E}, 0] ))
					\stackrel{(\ref{eq:2.08a})}{=}\sigma^*\tilde{\lambda}^m(\lambda_t([\underline{E}, 0] )).
				\end{multline}
				Since $\sigma^*$ is injective, (\ref{eq:2.057}) holds for 
				$\mathrm{rk} E=k+1$. 
				The inductive process for (\ref{eq:2.057}) is complete.
			\end{proof}

			\begin{proof}[Proof of Theorem \ref{thm:1.09}]
				By Lemmas \ref{lemma:3.1} and \ref{lemma:3.2}, (\ref{eq:1.01}), (\ref{eq:2.002}) and (\ref{eq:2.041}), for 
				cycles $(\underline{E}, \phi)$ and $(\underline{F}, \psi)$,
				we have
				\begin{multline}\label{eq:2.057a}
					\lambda_t\big([\underline{E},\phi]\cup [\underline{F},\psi]\big)
					\xlongequal{(\ref{eq:2.002})(\ref{eq:2.041})}\lambda_t\big([\underline{E}, 0]\cup [\underline{F}, 0]\big)
					\widetilde{+}\lambda_t\big([\underline{E},0]\cup [\underline{0},\psi]
					\big)
					\\
					\widetilde{+}\lambda_t\big([\underline{0},\phi]\cup 
					[\underline{F},0]\big)
					\widetilde{+}\lambda_t\big([\underline{0},\phi]\cup 
					[\underline{0},\psi]\big)
					\\
					=\lambda_t\big([\underline{E}, 0]\big)\widetilde{\times} \lambda_t\big([\underline{F}, 0]\big)
					\widetilde{+}\lambda_t\big([\underline{E},0]\big)\widetilde{\times} \lambda_t\big( [\underline{0},\psi]
					\big)
					\\
					\widetilde{+}\lambda_t\big([\underline{0},\phi]\big)\widetilde{\times} \lambda_t\big( 
					[\underline{F},0]\big)
					\widetilde{+}\lambda_t\big([\underline{0},\phi]\big)\widetilde{\times} \lambda_t\big( 
					[\underline{0},\psi]\big)
					\\
					=\left(\lambda_t\big([\underline{E}, 0]\big)\widetilde{+}\lambda_t\big([\underline{0},\phi]\big)  \right)\widetilde{\times}\left(\lambda_t\big( 
					[\underline{F},0]\big)\widetilde{+} \lambda_t\big( 
					[\underline{0},\psi]\big) \right)
					=\lambda_t\big([\underline{E}, \phi]\big)
					\widetilde{\times}\lambda_t\big([\underline{F}, \psi]\big),
				\end{multline}
				and
				\begin{multline}\label{eq:3.33}
					\lambda_t(\lambda^m([\underline{E}, \phi]) )
					\stackrel{(\ref{eq:1.01})}{=}\lambda_t\left(\sum_{i=0}^m\lambda^i([\underline{E}, 0])\cup
					\lambda^{m-i}([\underline{0}, \phi]) \right)
					\\
					\xlongequal{(\ref{eq:2.002})(\ref{eq:2.041})}\widetilde{+}_{i=0}^m\left(\lambda_t\left(\lambda^i([\underline{E}, 0]) \right)
					\widetilde{\times} \lambda_t\left(\lambda^{m-i}([\underline{0}, \phi]) \right)\right)=\widetilde{+}_{i=0}^m\left(\tilde{\lambda}^i\left(\lambda_t([\underline{E}, 0]) \right)
					\widetilde{\times} \tilde{\lambda}^{m-i}\left(\lambda_t([\underline{0}, \phi]) \right)\right)
					\\
					=\tilde{\lambda}^m\left(\lambda_t([\underline{E}, 0])\widetilde{+}\lambda_t([\underline{0}, \phi])\right)
					=\tilde{\lambda}^m\left(\lambda_t([\underline{E}, \phi])\right).
				\end{multline}
				From Definition \ref{defn:1.06}, the pre-$\lambda$-ring
				structure on $\widehat{K}^0(B)$ is a $\lambda$-ring structure.
				The proof of Theorem \ref{thm:1.09} is complete.
			\end{proof}

			\section{Adams operations}\label{s04}
			
			If $R$ is a $\lambda$-ring, by \cite[\S V Proposition 7.5 i)]{BerthelotTh},
			for $k\in \N^*$, the Adams
			operations $\Psi^k: R\to R$
			defined in (\ref{eq:1.19}) satisfy (\ref{eq:1.21}).
			From \cite[\S I.5 (2)]{ATall69}, for $x\in R$,
			\begin{align}\label{eq:3.1a}
				\Psi^k(x)=\nu_k(\lambda^1(x),\cdots, \lambda^k(x)),
			\end{align}
			where $\nu_k(s_1,\cdots, s_k)=\xi_1^k+\cdots+\xi_q^k$
			is the $k$-th Newton polynomial, $s_i$ being the $i$-th
			elementary symmetric function in $\{\xi_j\}$.
			By \cite[\S I Proposition 5.1]{ATall69}, $\Psi^k: R\to R$
			is a homomorphism of $\lambda$-rings.
			
			For a geometric triple $\underline{E}$ on $B$, we denote by
			\begin{align}\label{eq:2.02}
				\Psi^k(\underline{E}):=\nu_k(\Lambda^1(\underline{E}),\cdots,
				\Lambda^k(\underline{E})).
			\end{align} 
			
			\begin{prop}\label{prop:3.1}
				For $[\underline{E}, \phi]\in \widehat{K}^0(B)$, 
				$\phi =\sum_{l\geq 1} \phi_l$, $\phi_l\in 
				\Omega^{2l-1}(B,\R)/\mathrm{Im}\,d$,
				$k\in \N^*$, we have
				\begin{align}\label{eq:2.03}
					\Psi^k([\underline{E}, \phi])
					=\left[ \Psi^k(\underline{E}), \sum_{l\geq 1} k^l\phi_l \right].
				\end{align}
				For any $k\in \N^*$, $\Psi^k:\widehat{K}^0(B)\to 
				\widehat{K}^0(B)$ is a homomorphism of $\lambda$-rings.	
			\end{prop}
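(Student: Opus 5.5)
The plan is to compute $\Psi^k$ on the two kinds of generators separately, using additivity. By (\ref{eq:1.19}), $\Psi^k$ is additive on any pre-$\lambda$-ring, because $\lambda_t$ turns sums into products and $\frac{d}{dt}\log$ turns products into sums. Since $[\underline{E},\phi]=[\underline{E},0]+[\underline{0},\phi]$, it is enough to show
\begin{align*}
\Psi^k([\underline{E},0])=[\Psi^k(\underline{E}),0],\qquad
\Psi^k([\underline{0},\phi])=\Big[\underline{0},\sum_{l\geq1}k^l\phi_l\Big].
\end{align*}
Theorem \ref{thm:1.09} says $\widehat{K}^0(B)$ is a $\lambda$-ring, so (\ref{eq:3.1a}) applies and gives $\Psi^k(x)=\nu_k(\lambda^1(x),\dots,\lambda^k(x))$.

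For the first identity, (\ref{eq:2.10a}) gives $\lambda^i([\underline{E},0])=[\Lambda^i(\underline{E}),0]$. Sums and products of cycles with zero form part again have zero form part: by (\ref{eq:2.084}) the product is $(\underline{E}\otimes\underline{F},0)$. Hence we can evaluate the polynomial $\nu_k$ at the level of triples. A subtlety is that $\nu_k$ has negative integer coefficients. So I would write $\nu_k=\nu_k^+-\nu_k^-$ with $\nu_k^\pm$ having nonnegative coefficients, and read (\ref{eq:2.02}) as the formal difference of the triples $\nu_k^\pm(\Lambda^\bullet(\underline{E}))$. This gives $[\Psi^k(\underline{E}),0]$.

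For the second identity, (\ref{eq:2.10a}) gives $\lambda^i([\underline{0},\phi])=[\underline{0},[\lambda^i_\Gamma(0,\phi)]_{\mathrm{odd}}]$. The map $a$ is additive, and by (\ref{eq:2.040}) and (\ref{eq:2.008}) it is multiplicative compared with the $*$-product on elements $(0,\cdot)$ of $\Gamma(B)$: the product of two such cycles is $(\underline{0},[(0,\phi)*(0,\psi)]_{\mathrm{odd}})$. So $\phi\mapsto[\underline{0},\phi]$ turns polynomials without constant term into the corresponding polynomials in $\Gamma(B)$. Here $\nu_k$ has no constant term, and each $\lambda^i_\Gamma(0,\phi)$ with $i\geq1$ lies in the odd part by (\ref{eq:2.8a}). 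Therefore
\begin{align*}
\Psi^k([\underline{0},\phi])=\big[\underline{0},[\nu_k(\lambda^1_\Gamma(0,\phi),\dots,\lambda^k_\Gamma(0,\phi))]_{\mathrm{odd}}\big]=\big[\underline{0},[\Psi^k_\Gamma(0,\phi)]_{\mathrm{odd}}\big].
\end{align*}
The last equality uses that $\Gamma(B)$ is a $\lambda$-ring (Lemma \ref{lem:2.1}), so (\ref{eq:3.1a}) holds there with the Adams operations $\Psi^k_\Gamma$ from (\ref{eq:2.004}). Finally, (\ref{eq:2.009}) gives $\Psi^k_\Gamma(0,\phi)=(0,\sum_l k^l\phi_l)$. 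Adding the two identities yields (\ref{eq:2.03}).

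The homomorphism claim then follows directly from \cite[\S I Proposition 5.1]{ATall69} together with Theorem \ref{thm:1.09}.

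The step I expect to need most care is the second identity: checking that $a$ intertwines polynomial evaluation in $\widehat{K}^0(B)$ with the $\Gamma(B)$-product. This includes the term $-d\phi_1\wedge\phi_2$, which has to match between (\ref{eq:2.084}) and (\ref{eq:2.008}). It also includes confirming that the constant term $1=\lambda^0$ never enters $\nu_k$.
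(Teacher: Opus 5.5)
Your proof is correct and is essentially the paper's argument: both evaluate $\nu_k$ on $\lambda^1,\dots,\lambda^k$ via (\ref{eq:3.1a}) in $\widehat{K}^0(B)$, transport that polynomial into $\Gamma(B)$ using the compatibility of $+$ and $\cup$ on cycles with $+$ and $*$, and finish with (\ref{eq:3.1a}) in $\Gamma(B)$ and (\ref{eq:2.009}). The only difference is that you first split $[\underline{E},\phi]=[\underline{E},0]+[\underline{0},\phi]$ using additivity of $\Psi^k$, so each half needs only (\ref{eq:2.10a}) and the map $a$, whereas the paper treats $(\underline{E},\phi)$ in one step through (\ref{eq:2.79})--(\ref{eq:2.80}), which rely on (\ref{eq:2.054}).
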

			\begin{proof}
				Applying (\ref{eq:3.1a}) for the $\lambda$-ring
				$\Gamma(B)$, we get
				\begin{align}\label{eq:3.3a}
					\Psi_{\Gamma}^k(\ch(\underline{E}), \phi)=\nu_k(\lambda_{\Gamma}^1(
					\ch(\underline{E}), \phi),\cdots, \lambda_{\Gamma}^k(\ch(\underline{E}), \phi)).
				\end{align}
				Since $\widehat{K}^0(B)$ is a $\lambda$-ring
				and $\nu_k$ is a polynomial with integer
				coefficients,
				from (\ref{eq:2.79}), (\ref{eq:2.80}), (\ref{eq:3.1a}), 
				(\ref{eq:2.02}) and (\ref{eq:3.3a}),				
				we have
				\begin{multline}
					\Psi^k([\underline{E}, \phi])\stackrel{(\ref{eq:3.1a})}{=}
					\nu_k(\lambda^1([\underline{E}, \phi]),\cdots, \lambda^k([\underline{E}, \phi]))
					\\
					\xlongequal{(\ref{eq:2.79})(\ref{eq:2.80})}\left[
					\nu_k(\Lambda^1(\underline{E}), 
					\cdots, \Lambda^k(\underline{E})), 
					\left[\nu_k\left(\lambda_{\Gamma}^1(\ch(\underline{E}),\phi),
					\cdots,\lambda_{\Gamma}^k(\ch(\underline{E}),\phi) 
					\right)\right]_{\mathrm{odd}}\right]
					\\
					\xlongequal{(\ref{eq:2.02})(\ref{eq:3.3a})}
					\left[\Psi^k(\underline{E}), [\Psi_{\Gamma}^k(
					\ch(\underline{E}),\phi) ]_{\mathrm{odd}}
					\right].
				\end{multline}
				Thus this proposition follows directly from 
				(\ref{eq:2.009}). The last part is from 
				\cite[\S I Proposition 5.1]{ATall69} as explained above.
			\end{proof}
			
			Let $f: B\to B'$ be a smooth map between two closed manifolds. Then from Proposition \ref{prop:3.1}, for any $x\in \widehat{K}^0(B')$, we have
			\begin{align}\label{eq:4.06}
				\Psi^k(f^* x)=f^*\Psi^k(x).
			\end{align}			
			
			From (\ref{eq:2.02}), if $\underline{L}=(L, h^L, \nabla^L)$ 
			is a geometric triple on $B$ with line bundle $L$, 
			by the definition of $\nu_k$, we have
			\begin{align}\label{eq:3.6a}
				\Psi^k(\underline{L})=\underline{L^k}.
			\end{align}
			If $\underline{E}=\oplus_{j=1}^r\underline{L_j}$, 
			and $\underline{L_j}=(L_j, h^{L_j}, \nabla^{L_j})$
			with line bundles $L_j$, then by Theorem \ref{thm:1.09}, 
			Proposition \ref{prop:3.1} and  (\ref{eq:3.6a}),
			\begin{multline}\label{eq:3.2a}
				[\Psi^k(\underline{E}), 0]=\Psi^k([\underline{E},0])= 
				\Psi^k\left(\sum_{j=1}^r\left[\underline{L_j},0\right]\right)
				=\sum_{j=1}^r\Psi^k\left(\left[\underline{L_j},0
				\right]\right)
				\\
				=\sum_{j=1}^r\left[\Psi^k\left(\underline{L_j}\right),0\right]
				=\sum_{j=1}^r\left[\underline{L_j^k},0\right]
				=\left[\bigoplus_{j=1}^r\underline{L_j^k}, 0\right].
			\end{multline}
			%
			
			From Lemma \ref{lem:2.1} and (\ref{eq:2.054}),
			\begin{align}\label{eq:3.6b}
				\lambda_Z^k\left(\ch(\underline{E})\right)=\ch
				\left(\Lambda^k(\underline{E})\right).
			\end{align}
			Thus from (\ref{eq:3.1a}), (\ref{eq:2.02}), (\ref{eq:3.6b}) 
			and $\nu_k$ is a polynomial, we have
			\begin{multline}\label{eq:4.09}
				\Psi_{Z}^k\left(\ch(\underline{E})\right)
				=\nu_k\left(\lambda_Z^1\left(\ch(\underline{E})\right),\cdots,
				\lambda_Z^k\left(\ch(\underline{E})\right)\right)
				=\nu_k\left(\ch\left(\Lambda^1(\underline{E})\right),\cdots,
				\ch\left(\Lambda^k(\underline{E})\right)\right)
				\\
				=\ch\left(\nu_k\left(\Lambda^1(\underline{E}),\cdots,
				\Lambda^k(\underline{E})\right)\right)
				=\ch\left(\Psi^k(\underline{E})\right).
			\end{multline}
			Let $R: \widehat{K}^0(B)\to Z^{\mathrm{even}}(B, \mathbb{R})$ be the curvature map defined by
			\begin{align}\label{eq:4.10}
				R([\underline{E}, \phi]):=\ch(\underline{E})-d\phi.
			\end{align}			
			Then it is well-defined by Definition \ref{defn:2.12} and (\ref{eq:2.04}). Thus from Proposition \ref{prop:3.1}, (\ref{eq:4.09}) and
			(\ref{eq:4.10}), we have
			\begin{multline}\label{eq:4.12}
				R\circ \Psi^k([\underline{E}, \phi])=R\left(\left[ \Psi^k(\underline{E}), \sum_{l\geq 1} k^l\phi_l \right]\right)
				=\ch\left(\Psi^k(\underline{E}) \right)-\sum_{l\geq 1} k^l d\phi_l 
				\\
				=\Psi^k_Z(\ch(\underline{E}))-\Psi_Z^k(d\phi)=
				\Psi^k_Z\circ R([\underline{E}, \phi]).
			\end{multline}
			Moreover from Proposition \ref{prop:3.1} and (\ref{eq:2.10}),
			\begin{align}\label{eq:4.13}
				I\circ \Psi^k([\underline{E}, \phi])=\Psi^k\circ I([\underline{E}, \phi]).
			\end{align} 
			From the proof of \cite[Theorem 3.1]{Bunke10}, there is 
			a unique natural transform of $\Psi^k: \widehat{K}^0\to \widehat{K}^0$ which satisfies (\ref{eq:4.12}) and (\ref{eq:4.13}).
			So the Adams operations
			$\Psi^k$ in (\ref{eq:2.03}) gives a constructive realization
			of $\widehat{\Psi}^k$ in \cite{Bunke10}.

			\section{$T_g$-equivariant differential K-theory}\label{s05}
			
			Let $B$ be a closed smooth manifold and $G$ be a compact Lie group acting smoothly on $B$. In this subsection, we will discuss the $\lambda$-ring
			structure on some equivariant version of differential 
			K-theory on $B$ related to a fixed element of $G$.
			
			For $g\in G$ fixed, let $T_g:=\overline{\{g\}}$ be the 
			compact abelian subgroup of $G$ generated by $g$. Comparing 
			with the non-equivariant case, a $G$-equivariant geometric
			triple $\underline{E}=(E, h^E, \nabla^E)$ consists of a 
			$G$-equivariant Hermitian vector bundle $(E, h^E)$ and a 
			$G$-invariant Hermitian connection $\nabla^E$. We parametrize
			the complex irreducible representation of $T_g$ by $\{V_{\gamma} \}_{\gamma\in I}$, where each $V_{\gamma}$ is $1$-dimensional
			complex representation. Thus on 
			\begin{align}
				B^g:=\{x\in B; gx=x \},
			\end{align} 
			the fixed point set of $T_g$-action, as a $T_g$-equivariant
			geometric triple on $B^g$, we have the decomposition
			\begin{align}\label{eq:4.3}
				(E, h^E, \nabla^E)|_{B^g}=\bigoplus_{\gamma\in I} 
				(E_{\gamma}, h^{E_{\gamma}}, \nabla^{E_{\gamma}})\otimes V_{\gamma}.
			\end{align}
			
			Now we define
			\begin{align}\label{eq:4.4}
				\ch_{T_g}(\underline{E}):=\sum_{\gamma\in I}\ch\left(E_{\gamma}\right)\otimes 
				\chi_{V_{\gamma}}\in \Omega^{\mathrm{even}}(B^g, \R)\otimes 
				R(T_g),
			\end{align}
			with $\chi_{V_{\gamma}}$ the character of the representation $V_{\gamma}$.
			Here $R(T_g)$ denotes the representation ring of $T_g$.
			If we evaluate $\ch_{T_g}(\underline{E})$ at $g\in T_g$, we obtain the usual $\ch_g(
			\underline{E})$.
			
			Comparing with (\ref{eq:2.2}), for two $G$-equivariant 
			geometric triples $\underline{E_0}$ and $\underline{E_1}$,
			which have the same underlying $G$-equivariant vector bundle
			$E$, there exist the $G$-equivariant projection 
			$\pi: B\times \R\to B$, and $G$-equivariant geometric triple
			$\underline{\pi^*E}$ on $B\times \R$, such that 
			$\underline{\pi^*E}|_{B\times \{j\}}=\underline{E_j}$, $j=0, 1$.
			Here the $G$-action on $B\times \R$ is naturally lifted from 
			the $G$-action on $B$ and the trivial action on $\R$.
			Then on $B^g\times \R$, we have the decomposition as in 
			(\ref{eq:4.3}) for $\pi^*E$. Thus the equivariant Chern-Simons
			class $\widetilde{\ch}_{T_g}\left(\underline{E_0},
			\underline{E_1}\right)\in \left(\Omega^{\mathrm{odd}}(B^g, \R)
			/\mathrm{Im}\,d\right) \otimes R(T_g)$ is defined by
			\begin{align}
				\widetilde{\ch}_{T_g}\left(\underline{E_0},
				\underline{E_1}\right):=\int_0^1\left\{ 
				\ch_{T_g}\left(\underline{\pi^*E}\right)\right\}^{ds}ds.
			\end{align}
			Moreover, as in (\ref{eq:2.04}),
			\begin{align}\label{eq:5.5}
				d\,\widetilde{\ch}_{T_g}\left(\underline{E_0}, \underline{E_1}\right)
				=\ch_{T_g}(\underline{E_1})-\ch_{T_g}(\underline{E_0})
				\in \Omega^{\mathrm{even}}(B^g, \R)
				\otimes R(T_g)
			\end{align}
			and the Chern-Simons class depend only on $\nabla^{E_j}$
			for $j=0, 1$, which follows exactly from the proof of 
			\cite[Theorem B.5.4]{MM07}.
			
			\begin{defn}
				For $g\in G$, a cycle for the $T_g$-equivariant differential 
				K-theory of $B$ is a pair $(\underline{E}, \phi)$, where 
				$\underline{E}$ is a $G$-equivariant geometric triple on $B$
				and $\phi \in  \left(\Omega^{\mathrm{odd}}(B^g, \R)
				/\mathrm{Im}\,d\right) \otimes R(T_g)$. Two cycles
				$\left(\underline{E_1}, \phi_1\right)$ and $\left(\underline{E_2}, \phi_2\right)$ are equivalent
				if there exist a $G$-equivariant vector bundle isomorphism
				$\Phi:E_1\oplus E_3\to E_2\oplus E_3$ over $B$ such that
				\begin{align}
					\widetilde{\ch}_{T_g}\left(\underline{E_1}\oplus\underline{E_3}, 
					\Phi^{*} \left(\underline{E_2}\oplus
					\underline{E_3}\right)\right)
					=\phi_2-\phi_1.
				\end{align}
				We define the $T_g$-equivariant differential K-group 
				$\widehat{K}_{T_g}^0(B)$ to be the Grothendieck group of 
				equivalence classes of cycles. 
			\end{defn}
			
			For cycles $(\underline{E}, \phi)$ and $(\underline{F}, \psi)$,
			we can define $[\underline{E}, \phi]\cup [\underline{F}, \psi]$
			as in (\ref{eq:2.084}), by replacing $\ch$ by $\ch_{T_g}$,
			and the wedge product for differential forms by the product 
			defined as
			$
			(\phi\otimes \chi_V)\wedge (\psi\otimes \chi_W)=(\phi\wedge
			\psi) \otimes \chi_V\chi_W
			$
			for $\phi\otimes \chi_V, \psi\otimes \chi_W\in 
			\Omega^*(B^g,\R)\otimes R(T_g)$.
			Note that $\chi_V\chi_W=\chi_{V\otimes W}$.
			Following the same proof of \cite[Lemma 2.15]{LM20},
			we see that $\widehat{K}_{T_g}^0(B)$ is also a commutative 
			ring with unity $1=[\underline{\C}, 0]$ where $\underline{\C}$
			is equivariant trivial. Remark that in \cite[Definition 2.14]{LM20}, another equivariant differential K-ring
			$\widehat{K}^0_g(B)$ is defined. In fact, evaluating on $g\in G$, we can obtain a well-defined ring homomorphism
			\begin{align}\label{eq:5.07}
				\mathrm{ev}: \widehat{K}_{T_g}^0(B)\to \widehat{K}_{g}^0(B).  
			\end{align}
			Since $R(T_g)$ is an $R(G)$-module, 
			$\widehat{K}_{T_g}^0(B)$ is also an $R(G)$-module
			and this evaluation map is a homomorphism of $R(G)$-modules.
			
			The following proposition is the equivariant extension of 
			(\ref{d034}) and the analogue of \cite[Proposition 3.1]{LM20}.
			The proof of it is the same as that in \cite[Proposition 3.1]{LM20}.
			
			\begin{prop}
				For $g\in G$, there exists an exact sequence of $R(G)$-modules
				\begin{align}\label{eq:5.08}
					K_G^{1}(B)\overset{\footnotesize{\ch_{T_g
					}}}{\longrightarrow} 
					\left(\Omega^{\mathrm{odd}}(B^g, \R)/\Im\, d\right)
					\otimes R(T_g)
					\overset{\footnotesize{a}}{\longrightarrow} \widehat{K}_{T_g}^{0}(B) 
					\overset{\footnotesize{I}}{\longrightarrow} K_G^{0}(B)
					\longrightarrow 0,
				\end{align}	 
				where $K_G^*(B)$ denotes the equivariant topological $K$-group 
				of $B$, $a(\phi)=[\underline{0}, \phi]$, $I([\underline{E}, \phi])=[E]$ and $\ch_{T_g}$ on $K_G^1(B)$ is defined in the same way as \cite[(3.7)]{LM20} by replacing $\ch_g$ by $\ch_{T_g}$
				in (\ref{eq:4.4}) on equivariant geometric triples.
				
			\end{prop}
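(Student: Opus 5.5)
The plan is to reproduce the non-equivariant argument for (\ref{d034}) in \cite[(2.21)]{FreedLott10}, as adapted in \cite[Proposition 3.1]{LM20}, replacing $\ch$ by $\ch_{T_g}$ throughout. The equivariant Chern--Simons calculus is already available: transgression (\ref{eq:5.5}) and independence of the path of connections. The $R(G)$-module structure comes from tensoring with representations of $G$; restricted to $T_g$, these act on $R(T_g)$ by multiplication. So each map will be $R(G)$-linear once it is shown to be well defined. I will check exactness at the three spots from right to left.

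\textbf{Exactness at $K^0_G(B)$ and at $\widehat{K}^0_{T_g}(B)$.} Surjectivity of $I$ is immediate, since every $G$-bundle admits a $G$-invariant metric and connection (average over $G$). The composition $I\circ a=0$ is clear. Now suppose $I([\underline{E},\phi])=0$; write the class as a difference $[\underline{E},\phi]-[\underline{F},\psi]$. Then $E\oplus E_3\cong F\oplus E_3$ equivariantly for some $G$-bundle $E_3$. Choose an equivariant geometric triple $\underline{E_3}$ and the isomorphism $\Phi$. By the defining relation, $(\underline{E}\oplus\underline{E_3},\phi)$ is equivalent to $(\Phi^*(\underline{F}\oplus\underline{E_3}),\phi+\widetilde{\ch}_{T_g}(\cdot,\cdot))$, so the difference class equals $a$ of an explicit form. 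Hence $\ker I=\operatorname{Im} a$.

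\textbf{Exactness at the forms term.} This is the main step. First, $a\circ\ch_{T_g}=0$. An element of $K^1_G(B)$ can be represented by an equivariant automorphism $U$ of a trivialized (or equivariant product) bundle $\underline{E}$, and $\ch_{T_g}([U])$ is given, as in \cite[(3.7)]{LM20}, by $\widetilde{\ch}_{T_g}(\underline{E},U^*\underline{E})$. Taking $\Phi=U$ in the equivalence relation gives $(\underline{E},0)\sim(\underline{E},\ch_{T_g}(U))$, so $a(\ch_{T_g}(U))=0$. Conversely, suppose $a(\phi)=0$. Then there are $\underline{E_3}$ and an equivariant automorphism $\Phi$ of $E_3$ with $\widetilde{\ch}_{T_g}(\underline{E_3},\Phi^*\underline{E_3})=\phi$. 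The main obstacle is to show that the equivariant Grothendieck-group relation reduces to this single-automorphism form, i.e.\ that $[\underline{0},\phi]=0$ really yields such $(E_3,\Phi)$ rather than a zig-zag. I would handle it by the standard stabilization: chains of equivalences compose by direct sum, and Chern--Simons forms are additive under composition of isomorphisms. Then $\Phi$ defines a class in $K^1_G(B)$ (clutching over $B\times S^1$, or via an equivariant embedding of $E_3$ into a trivial bundle $B\times V$ with $V$ a $G$-representation, extending $\Phi$ by the identity on a complement). Its $\ch_{T_g}$ equals $\phi$ by the definition in \cite[(3.7)]{LM20} and path-independence of $\widetilde{\ch}_{T_g}$. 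Finally, to see that $\ch_{T_g}$ is well defined on $K^1_G(B)$ and lands in the stated group, I would note that closedness follows from (\ref{eq:5.5}) and that homotopy invariance follows from the Chern--Simons transgression on $B\times\R$. These are exactly the non-equivariant arguments, with the $T_g$-isotypic decomposition (\ref{eq:4.3}) carried along.
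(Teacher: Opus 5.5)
Your proposal is correct and takes the same route as the paper, which just transports the proof of \cite[Proposition 3.1]{LM20} (the Freed--Lott exactness argument with $\ch$ replaced by $\ch_{T_g}$) to the $T_g$-equivariant setting. The only new ingredient the paper names for a general compact $G$ is \cite[Proposition 2.4]{Segal68}: every $G$-bundle $E$ has a $G$-bundle $E^{\bot}$ with $E\oplus E^{\bot}$ equivariantly trivial. This is exactly what your step of embedding $E_3$ into $B\times V$ and extending $\Phi$ by the identity uses, so it is worth citing explicitly rather than leaving it as one of two alternatives.
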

			
			Remark that in \cite[Proposition 3.1]{LM20}, the exactness
			was proved for the $S^1$-action. But the proof of it clearly applies to any compact Lie group action since by \cite[Proposition 2.4]{Segal68},
			for any compact Lie group $G$ and $G$-equivalent
			vector bundle $E$ over $B$, there exists 
			$G$-equivalent vector bundle $E^{\bot}$
			such that $E\oplus E^{\bot}$ is a $G$-equivariant trivial vector bundle.
			
			For a $G$-equivariant vector bundle $E$ over $B$, from \cite[Proposition 3.9]{Segal68}, $K_G^*(\mathbb{P}(E))$
			is generated as $K_G^*(B)$-algebra by the dual bundle 
			of the tautological bundle over $\mathbb{P}(E)$. Thus 
			the argument for Theorem \ref{prop:2.2} naturally
			can be extended to the 
			equivariant case. In other words, for the equivariant projection
			$\sigma:\mathbb{P}(E)\to B$, the pull-back map
			$\sigma^*: \widehat{K}_{T_g}^{0}(B)\to \widehat{K}_{T_g}^{0}(
			\mathbb{P}(E))$ is injective. So the splitting principle
			also holds for $T_g$-equivariant differential K-theory.
			
			In the following, we will construct a $\lambda$-ring structure
			on $\widehat{K}_{T_g}^{0}(B)$. Denote by 
			\begin{align}
				\Gamma_g(B):=\Gamma(B^g)\otimes R(T_g).
			\end{align}
			Then (\ref{eq:2.008}) for $\Gamma(B^g)$ and the multiplication
			in $R(T_g)$ induces the multiplication on $\Gamma_g(B)$.
			For $k\in \N$, we define the Adams operations
			\begin{align}
				\Psi^k_{\Gamma_g}:\Gamma_g(B)\to \Gamma_g(B), 
				\quad (\alpha\otimes \chi_{V_{\gamma_1}}, 
				\beta\otimes \chi_{V_{\gamma_2}})\mapsto
				(k^l\alpha\otimes \chi^k_{V_{\gamma_1}}, 
				k^l\beta\otimes \chi^k_{V_{\gamma_2}})
			\end{align}
			for $\alpha\in Z^{2l}(B^g, \R)$, $\beta\in \Omega^{2l-1}(B^g, 
			\R)/\mathrm{Im}\,d$ and irreducible representations
			$V_{\gamma_1}, V_{\gamma_2}$ of $T_g$. 
			Note that $\chi_{V_{\gamma}}^k=\chi_{V_{\gamma}^{\otimes k}}$
			and $V_{\gamma}^{\otimes k}$ is again an irreducible
			representation of $T_g$, thus
			 (\ref{eq:1.21}) holds for $\Psi^k_{\Gamma_g}$.
			By the same reason for
			Lemma \ref{lem:2.1}, from Theorem \ref{thm:1.07}, 
			$\Gamma_g(B)$ has a $\lambda$-ring structure
			$\lambda_{\Gamma_g}^n: \Gamma_g(B)\to \Gamma_g(B)$, $n\in \N$,
			constructed as in (\ref{eq:2.004}).
			
			For a cycle $(\underline{E}, \phi)$ of $\widehat{K}_{T_g}^0(B)$,
			comparing with (\ref{eq:2.043}), for $n\in \N$, set 
			\begin{align}\label{eq:5.12}
				\lambda^n(\underline{E},\phi):=\left(\Lambda^n(\underline{E}), 
				\left[\lambda_{\Gamma_g}^n(\ch_{T_g}(\underline{E}),\phi)\right]_{
					\mathrm{odd}}\right).
			\end{align}	
			where $[\cdot]_{\mathrm{odd}}$ is the component of $\Gamma_g(B)$
			in $\left(\Omega^{\mathrm{odd}}(B^g, \R)/\Im\, d\right)
			\otimes R(T_g)$.
			
			The proof of the following theorem is almost the same as that of 
			\cite[Theorem 2.6]{LM20} and Theorem \ref{thm:1.09}, just
			to extend everything to the equivariant version discussed above.
			
			\begin{thm}\label{thm:5.3}
				The map $\lambda^n$ in (\ref{eq:5.12}) induces a well-defined
				map
				\begin{align}
					\lambda^n: \widehat{K}_{T_g}^0(B)\to \widehat{K}_{T_g}^0(B)
				\end{align} 
				and the maps $\{\lambda^n; n\in \N \}$ is a $\lambda$-ring structure
				on $\widehat{K}_{T_g}^0(B)$.
			\end{thm}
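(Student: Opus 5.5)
The plan is to follow the non-equivariant argument step by step, with $\Gamma(B)$ replaced by $\Gamma_g(B)$, $\ch$ by $\ch_{T_g}$, $K^*$ by $K^*_G$, and $\widetilde{\ch}$ by $\widetilde{\ch}_{T_g}$.

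\emph{Step 1: well-definedness.} This is the part of \cite[Theorem 2.6]{LM20}. First I will check the equivariant analogue of (\ref{eq:2.054}):
\begin{align*}
\bigl[\lambda^n_{\Gamma_g}(\ch_{T_g}(\underline{E}),\phi)\bigr]_{\mathrm{even}}=\ch_{T_g}(\Lambda^n(\underline{E})).
\end{align*}
The even part of $\lambda^n_{\Gamma_g}$ depends only on the even input, and the map $\Gamma_g(B)\to Z^{\mathrm{even}}(B^g,\R)\otimes R(T_g)$ is a $\lambda$-homomorphism, as in Lemma \ref{lem:2.1}. So it suffices to treat $\ch_{T_g}$. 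On $B^g$ decompose $\underline{E}=\bigoplus_\gamma \underline{E_\gamma}\otimes V_\gamma$. Since the $V_\gamma$ are one-dimensional, $\Psi^k_{\Gamma_g}$ acts on $\ch(E_\gamma)\otimes\chi_{V_\gamma}$ as the Chern-form Adams operation tensored with $\chi_{V_\gamma}^k$. The identity then reduces, through the Chern roots of the curvature, to the non-equivariant identity and the exponential formula (\ref{eq:2.004}).

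Next, suppose $(\underline{E_1},\phi_1)\sim(\underline{E_2},\phi_2)$ via $\Phi$ and $\underline{E_3}$. I will run the path argument of \cite{LM20} on $B\times\R$ with the $G$-invariant connection $\underline{\pi^*E}$. Define $\lambda^n_{\Gamma_g}(\ch_{T_g}(\underline{\pi^*E}),\pi^*\phi)$ and integrate its $ds$-component. By the transgression formula (\ref{eq:5.5}) and the multiplicativity of $\lambda_{\Gamma_g}$, this shows that the odd parts change exactly by $\widetilde{\ch}_{T_g}(\Lambda^n(\cdot),\Lambda^n(\cdot))$. That gives (\ref{eq:2.28a}) in the equivariant setting. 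The pre-$\lambda$ axioms (\ref{eq:1.01}) follow because $\Lambda^n(E\oplus F)=\bigoplus_j\Lambda^jE\otimes\Lambda^{n-j}F$ equivariantly, combined with (\ref{eq:2.002}) for $\Gamma_g$.

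\emph{Step 2: the $\lambda$-ring identities.} I will repeat Lemma \ref{lemma:3.1} verbatim. Its proof only uses that $\Gamma_g(B)$ is a $\lambda$-ring (established above via Theorem \ref{thm:1.07}, since $\Gamma_g(B)$ is torsion free, being a real vector space tensored with the free $\Z$-module $R(T_g)$), together with the product formula (\ref{eq:2.040}) and the parity facts (\ref{eq:2.8a}). Those parity facts still hold because $\Psi^k_{\Gamma_g}$ preserves the even/odd splitting.

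For Lemma \ref{lemma:3.2} I will use induction on rank. The ingredients are:
\begin{itemize}
\item the equivariant splitting principle, that is, injectivity of $\sigma^*:\widehat{K}^0_{T_g}(B)\to\widehat{K}^0_{T_g}(\mathbb{P}(E))$, stated above;
\item a $G$-equivariant splitting $\sigma^*E=H^*\oplus E_1$, obtained as the orthogonal complement with respect to the invariant metric;
\item the rank-one computation, which is unchanged because $\Lambda^m$ of an equivariant line bundle vanishes for $m>1$.
\end{itemize}
The final assembly then proceeds exactly as in the proof of Theorem \ref{thm:1.09}.

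\emph{Main obstacle.} I expect the main difficulty to be the equivariant splitting principle, specifically the step $\ch_{T_g}(y)=\sigma^*\phi\Rightarrow\phi\in\ch_{T_g}(K^1_G(B))$. The fixed set $\mathbb{P}(E)^g$ is no longer a projective bundle over $B^g$; it is $\bigsqcup_\gamma\mathbb{P}(E_\gamma)$. I would handle this as follows.
\begin{itemize}
\item Write $y=\sum_i\sigma^*F_i\,[H]^i$ using \cite[Proposition 3.9]{Segal68}.
\item Restrict to each component $\mathbb{P}(E_\gamma)$. There $H$ restricts to $H_\gamma\otimes V_\gamma^*$, so $\ch_{T_g}[H]^i=e^{ic_1(H_\gamma)}\chi_{V_\gamma}^{-i}$.
\item Apply the Leray--Hirsch/Vandermonde argument of Theorem \ref{prop:2.2} on a component of maximal rank. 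This isolates the $i=0$ coefficient, possibly after an invertible change of basis over $R(T_g)\otimes\R$, and gives $\phi=\ch_{T_g}(\tilde F)$ for some $\tilde F\in K^1_G(B)$.
\end{itemize}
If this direct route is too messy, I would instead take the splitting principle as given by the remark preceding the theorem.
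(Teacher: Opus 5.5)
Your route is the paper's route. The paper proves Theorem \ref{thm:5.3} only by asserting that the proofs of \cite[Theorem 2.6]{LM20}, Lemmas \ref{lemma:3.1}, \ref{lemma:3.2} and Theorem \ref{thm:1.09} carry over once $\Gamma(B)$, $\ch$, $K^*$ are replaced by $\Gamma_g(B)$, $\ch_{T_g}$, $K^*_G$. It takes the equivariant splitting principle from the remark based on \cite[Proposition 3.9]{Segal68}. Your Steps 1 and 2 are a faithful elaboration of this.

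One small repair in Step 1. The element $(\ch_{T_g}(\underline{\pi^*E}),\pi^*\phi)$ restricts at $s=1$ to $(\ch_{T_g}(\underline{E_1}),\phi)$, not to $(\ch_{T_g}(\underline{E_1}),\phi+\widetilde{\ch}_{T_g}(\underline{E_0},\underline{E_1}))$, which is what the equivalent cycle requires. Use instead $(\ch_{T_g}(\underline{\pi^*E}),\pi^*\phi+\theta_s)$, where $\theta_s$ is the Chern--Simons form of the path restricted to $[0,s]$. Its curvature is $\pi^*(\ch_{T_g}(\underline{E_0})-d\phi)$. Since the curvature map $(\omega,\beta)\mapsto\omega-d\beta$ is a $\lambda$-ring homomorphism, the curvature of its image under $\lambda^n_{\Gamma_g}$ has no $ds$-component. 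The homotopy formula then gives exactly $\widetilde{\ch}_{T_g}(\Lambda^n\underline{E_0},\Lambda^n\underline{E_1})$ as the change of the odd part.

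The genuine gap is your direct proof of the equivariant splitting principle, which you single out as the main obstacle.

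\begin{itemize}
\item \emph{A single component cannot work.} On a fixed component $\mathbb{P}(E_\gamma)$, the $r$ classes $\chi_{V_\gamma}^{-i}e^{ic_1(H_\gamma)}$, $0\le i\le r-1$, generate a free $H^\bullet(B^g,\R)\otimes R(T_g)$-module of rank only $r_\gamma=\mathrm{rk}\,E_\gamma$. So once $g$ has two eigenvalues on $E|_{B^g}$, no component, of maximal rank or otherwise, isolates the $i=0$ coefficient.
\item \emph{All components together still give no invertible change of basis.} Modulo nilpotents, the transition matrix is a confluent Vandermonde matrix with nodes $\chi_{V_\gamma}^{-1}$ of multiplicities $r_\gamma$. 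Its determinant is a unit times $\prod_{\gamma<\gamma'}(\chi_{V_{\gamma'}}^{-1}-\chi_{V_\gamma}^{-1})^{r_\gamma r_{\gamma'}}$. This lies in the augmentation ideal, so it is never invertible over $R(T_g)\otimes\R$.
\item \emph{Torus case.} If $T_g$ is a torus, the determinant is a non-zero-divisor, and a lowest-degree argument does yield $\phi=\ch_{T_g}(F_0)$.
\item \emph{Finite-order case.} If $g$ has finite order, the determinant is a zero divisor, and $F_0$ is not determined by $\ch_{T_g}(y)$. Take $G=T_g=\mathbb{Z}/2$ acting trivially on $B=S^1$, $E=\C\oplus\C$ with $g=\mathrm{diag}(1,-1)$, $\chi$ the sign character, and $0\neq u\in K^1(S^1)$. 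Then $y=(1+\chi)\,\sigma^*u\,(1-[H])$ satisfies $\ch_{T_g}(y)=0$ on $\mathbb{P}(E)^g$, because $(1+\chi)(1-\chi)=0$. Yet $\ch_{T_g}(F_0)=(1+\chi)\ch(u)\neq0$.
\end{itemize}

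So in general you can only aim to show $\phi\in\ch_{T_g}(K^1_G(B))$, which needs an integrality argument, not $\phi=\ch_{T_g}(F_0)$. The paper itself does not treat this point; it only remarks that the argument of Theorem \ref{prop:2.2} extends via Segal's result. Your fallback of taking that remark as given therefore puts you exactly on the paper's footing, but the direct route you sketch is not a proof.
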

			
Remark that it is difficult to construct the $\lambda$-ring structure,
even the pre-$\lambda$-structure on $\widehat{K}_g^0(B)$ in (\ref{eq:5.07}) although
it is the most suitable model to derive the localization
formula for
eta invariants in \cite{LM20}.

			By Theorem \ref{thm:5.3}, for $k\in \N^*$, the 
			Adams operations $\Psi^k: \widehat{K}_{T_g}^0(B)\to \widehat{K}_{T_g}^0(B)$ defined as in (\ref{eq:1.19})
			satisfy (\ref{eq:1.21}). 
			As in Proposition \ref{prop:3.1}, for $[\underline{E}, \phi]\in
			\widehat{K}_{T_g}^0(B)$, $\phi=\sum_{l\geq 1, \gamma}\alpha_l\otimes \chi_{V_{\gamma}}$ with $
			\alpha_l\in \Omega^{2l-1}(B^g, \R)/\Im\, d$ and $\chi_{V_{\gamma}}\in R(T_g)$ irreducible,
			\begin{align}\label{eq:5.03}
				\Psi^k([\underline{E}, \phi])
				=\left[ \Psi^k(\underline{E}),  \sum_{l\geq 1, \gamma}k^l\alpha_l\otimes 
				\chi^k_{V_{\gamma}} \right].
			\end{align}
			Moreover, the properties (\ref{eq:4.12}) and (\ref{eq:4.13})
			also hold in this equivariant case with respect to
			the curvature map
			$R: \widehat{K}^0_{T_g}(B)\to Z^{\mathrm{even}}(B^g)\otimes R(T_g)$ defined by $R([\underline{E}, \phi])=\ch_{T_g}(\underline{E})-d\phi$ and the map $I$ in 
			(\ref{eq:5.08}).


			\
			



\begin{thebibliography}{10}
				
				\bibitem{A62}
				J.~F. Adams,
				\emph{Vector fields on spheres},
				Ann. of Math. (2) \textbf{75} (1962), 603--632.
				
				\bibitem{A67}
				M.~F. Atiyah,
				\emph{K-theory},
				Lecture notes by D.~W.~Anderson, W.~A.~Benjamin, Inc., New York-Amsterdam, 1967, v+166+xlix pp.
				
				
				\bibitem{ATall69}
				M.~F. Atiyah and D.~O. Tall, 
				\emph{Group representations, {$\lambda $}-rings 
					and the {$J$}-homomorphism}, 
				Topology \textbf{8} (1969), 253--297.
				
				
				
				\bibitem{BerthelotTh}
				P.~Berthelot, A.~Grothendieck, and L.~Illusie, \emph{Th\'eorie des
					{I}ntersections et {T}h\'eor\'eme de {R}iemann-{R}och (SGA6)}, 
				Lecture Notes in Mathematics,, vol. 225, 
				Springer-Verlag, Berlin, 1971.
				
				\bibitem{Bunke10}
				U.~Bunke, 
				\emph{Adams operations in smooth K-theory},
				Geom. Topol. \textbf{14} (2010), no.~4, 2349--2381.
				
				\bibitem{BS09}
				U.~Bunke and T.~Schick,
				\emph{Smooth K-theory},
				Ast\'erisque \textbf{328} (2009), 45--135.
				
				\bibitem{FH00}
				D.~S. Freed and M.~Hopkins, 
				\emph{On {R}amond-{R}amond fields and {$K$}-theory},
				J. High Energy Phys., (2000) no.~5 , Paper 44, 14 pp.
				
				
				\bibitem{FreedLott10}
				D.~S. Freed and J.~Lott, 
				\emph{An index theorem in differential {$K$}-theory},
				Geom. Topol. \textbf{14} (2010), no.~2, 903--966. 
				
				
				\bibitem{GS90c}
				H.~Gillet and C.~Soul\'e, 
				\emph{Characteristic classes for algebraic vector
					bundles with {H}ermitian metric. {II}},
				Ann. of Math. (2) \textbf{131}
				(1990), no.~1, 205--238.
				
				\bibitem{Grothendieck} 
				A.~Grothendieck,
				\emph{La th\'eorie des classes de Chern } 
				Bulletin de la Soci\'et\'e Math\'ematique de France, \textbf{86} (1958), 137--154.
				
				\bibitem{HS05}
				M.~J.~Hopkins and I.~M.~Singer,
				\emph{Quadratic functions in geometry, topology, and M-theory},
				J. Differential Geom. \textbf{70} (2005), no.~3, 329--452.
				
				\bibitem{K08}
				K.~R. Klonoff, 
				\emph{An index theorem in differential K-theory},
				Ph.D. thesis, The University of Texas at Austin, 2008,
				119~pp., ISBN:~978-0549-70973-2, ProQuest LLC.
				
				
				\bibitem{LNM308}
				D.~Knutson,
				\emph{{$\lambda$}-rings and the representation theory of the 
					symmetric group},
				Lecture Notes in Mathematics, Vol. \textbf{308}. Springer-Verlag, 
				Berlin-New York, 1973. iv+203 pp.
				
				
				
				
				
				
				
				
				
				\bibitem{LM20}
				B.~Liu and X.~Ma,
				\emph{Differential {$K$}-theory and localization formula for 
					$\eta$-invariants},
				Invent. Math. \textbf{222} (2020), no. 2, 545--613.
				
				\bibitem{Ro01} D.~Roessler, 
				\emph{Lambda-structure on {G}rothendieck groups of {H}ermitian 
					vector bundles}, 
				Israel J. Math. \textbf{122} (2001), 279--304.
				
				\bibitem{MM07}
				X.~Ma and G.~Marinescu, 
				\emph{Holomorphic {M}orse inequalities and {B}ergman
					kernels}, Progress in Mathematics, vol. 254, Birkh\"auser Verlag, Basel,
				2007. 
				
				\bibitem{Segal68}
				G.~Segal, 
				\emph{Equivariant {$K$}-theory}, Inst. Hautes \'Etudes Sci. Publ.
				Math. (1968), no.~34, 129--151.
				
				\bibitem{SS10}
				J.~Simons and D.~Sullivan,
				\emph{Structured vector bundles define differential K-theory},
				in \emph{Quanta of maths}, 
				Clay Math. Proc. \textbf{11}, Amer. Math. Soc., Providence, RI, 2010, 579--599.
				
				\bibitem{Z01}	W.~Zhang, 
				\emph{Lectures on {C}hern-{W}eil theory and {W}itten deformations},
				Nankai Tracts in Mathematics, vol.~4,
				World Scientific Publishing Co., Inc., River Edge, NJ, 2001.
				
			\end{thebibliography}

		\end{document}